\newtheorem{theorem}{Theorem}[section]
\newtheorem{corollary}[theorem]{Corollary}
\theoremstyle{definition}
\newtheorem{remark}[theorem]{Remark}
\title{Bounds on three- and higher-distance sets}
\author{
Oleg R.\ Musin\footnote{Research supported in part by NSF grant DMS-0807640 and NSA grant MSPF-08G-201.} 
 \;and 
Hiroshi Nozaki\footnote{Research supported by JSPS Research Fellowship. 
The second author stays at the University of Texas at Brownsville from August 24th, 2009 to August 23rd, 2010.}}
\begin{document}
\maketitle

\renewcommand{\thefootnote}{\fnsymbol{footnote}}
\footnote[0]{2010 Mathematics Subject Classification: 05E30 (94B65).}

	\begin{abstract}
	A finite set $X$ in a metric space $M$ is called an $s$-distance set if the set of distances between any two distinct points of $X$ has size $s$.
	The main problem for $s$-distance sets is to determine the maximum cardinality of $s$-distance sets for fixed $s$ and $M$. 
	In this paper, we improve the known upper bound for $s$-distance sets in the $n$-sphere for $s=3,4$. In particular, we determine the maximum cardinalities of three-distance sets for $n=7$ and $21$. 
	We also give the maximum cardinalities of $s$-distance sets in the Hamming space and the Johnson space for several $s$ and dimensions. 
	\end{abstract}
	
	\textbf{Key words}: $s$-distance set, two-point-homogeneous space.
	
\section{Introduction}
	A finite subset $X$ of the Euclidean space $\mathbb{R}^n$ or the unit sphere $S^{n-1}$ is called an $s$-distance set (or $s$-code) if there exist $s$ Euclidean distances between two distinct vectors in $X$. 
	The main problem for $s$-distance sets is to determine the maximum cardinality of $s$-distance sets for fixed $s$ and $n$. 
	
	Bannai, Bannai and Stanton  \cite{Bannai-Bannai-Stanton} proved that the size of $s$-distance sets in $\mathbb{R}^n$ is bounded above by $\binom{n+s}{s}$. 
	When $s\geq 2$, we know only one example attaining this upper bound, namely, for $(n,s)=(8,2)$ \cite{Lisonek}. 
	The maximum cardinality of $s$-distance sets in $\mathbb{R}^n$ are determined for the following $n$ and $s$ \cite{Croft, Kelly, Lisonek}. 
	
	\[
	\begin{array}{c|ccccccc}
	n &  2 & 3 & 4 & 5 & 6 & 7 & 8\\
	\hline
	\text{size} & 5 & 6 & 10 & 16 & 27 & 29 & 45\\
	\end{array}
	\]
	\begin{center}
		\text{Table 1: Maximum cardinalities of two-distance sets in $\mathbb{R}^n$.}
	\end{center}
	\[
	\begin{array}{c|ccccccc}
	s &  2 & 3 & 4 & 5 \\
	\hline
	\text{size} & 5 & 7 & 9 & 12\\
	\end{array}
	\]
	\begin{center}
	\text{Table 2:  Maximum cardinalities of $s$-distance sets in $\mathbb{R}^2$.}
	\end{center}
	Moreover, Shinohara \cite{Shinohara} proved the icosahedron is the unique maximum three-distance set in $\mathbb{R}^3$.
	
	Delsarte, Goethals, and Seidel proved that the largest cardinality of $s$-distance sets in $S^{n-1}$ is bounded above by $\binom{n+s-1}{s}+\binom{n+s-2}{s-1}$. 
	In the circle, the regular $(2s+1)$-gons attain this upper bound. When $n\geq 3$, we have two examples attaining this upper bound, namely, for $(n,s)=(6,2),(22,2)$ \cite{Delsarte-Goethals-Seidel}. 
	We have the following results for the maximum cardinalities of two-distance sets in $S^{n-1}$ \cite{Delsarte-Goethals-Seidel, Musin}.
	\[
\begin{array}{c|cccccccc}
n     &  2  & 3   & 4    & 5    & 6    & 7 \cdots 21               & 22                   & 24 \cdots 39 \\
\hline
\text{size} & 5 & 6 & 10 & 16 & 27 & \frac{n(n+1)}{2} & 275  & \frac{n(n+1)}{2}                   
\end{array}
	\] 
	\begin{center}
	Table 3:  Maximum cardinalities of two-distance sets in $S^{n-1}$.
	\end{center}
	When $s\geq 3$, we have only one result, namely, that of Shinohara \cite{Shinohara} for $(n,s)=(3,3)$. 
	
	Recently, Musin \cite{Musin} determined the maximum cardinalities of two-distance sets in $S^{n-1}$ for $7\leq n \leq 21$ and $24 \leq n \leq 39$ by a certain general method. 
	This method needs three theorems, namely, Delsarte's linear programming bound, Larman-Rogers-Seidel's theorem and a certain useful bound.  
	This bound in \cite{Musin} is the following: for two-distance sets in $S^{n-1}$ with inner products $a_1$ and $a_2$, 
if $a_1+a_2 \geq 0$, then the size of two-distance set is at most $\binom{n+1}{2}$. 
	Larman, Rogers, and Seidel proved that if the size of a two-distance set in $\mathbb{R}^n$ 
with distances $b_1$ and $b_2$ ($b_1>b_2$)
is greater than $2n+3$, 
then the ratio $b_1^2/b_2^2$ is equal to $k/(k-1)$ where $k$ is a positive integer bounded above by some function of $n$ \cite{Larman-Rogers-Seidel}. 
	This method in \cite{Musin} is applicable to $s$-distance sets in a two-point-homogeneous space $M$ with
a certain assumption. 
	
	Nozaki extended the upper bound in \cite{Musin} to spherical $s$-distance sets for any $s$ \cite{Nozaki-Shinohara}. 
This upper bound is applicable to $M$. By this generalized bound, Barg and Musin \cite
{Barg-Musin} gave 
the maximum $s$-distance sets in the Hamming space and the Johnson space for some $s$ and small dimensions. 
	Larman-Rogers-Seidel's theorem is also extended to $s$-distance sets for any $s$ \cite{Nozaki}. 
	This theorem is also applicable to $s$-distance sets in $M$. 
	
	In the present paper, we improve the known upper bound for $s$-distance sets in $S^{n-1}$ by the method in \cite{Musin} with the generalized Larman-Rogers-Seidel's theorem and the Nozaki upper bound. 
	In particular, we determine the maximum cardinalities of three-distance sets in $S^{7}$ and $S^{21}$. We also give the maximum cardinalities of $s$-distance sets in the Hamming space and the Johnson space for some $s \geq 3$ and more dimensions.

\section{Few distance sets in two-point-homogeneous spaces}

\subsection{Basic definitions}
	In this subsection, we introduce the concept of two-point-homogeneous spaces $M$ 
and our restrictive assumption \cite[Chapter 9]{Conway-Slaone}, \cite{Kabatyansky-Levenshtein, Levenshtein}. 
	
	Let $G$ be a finite group or a connected compact group. 
	We call $M$ a two-point-homogeneous $G$-space if $M$ holds the following properties: 
	\begin{enumerate}
	\item $M$ is a set on which $G$ acts. 
	\item $M$ is a metric space with a distance function $\tau$.
	\item $\tau$ is strongly invariant under $G$: for any $x,x',y,y' \in M$, $\tau(x,y)=\tau(x',y')$ if and only if there is an element $g \in G$ such that $g(x)=x'$ and $g(y)=y'$.   
	\end{enumerate}
	Let $H$ be the subgroup of $G$ that fixes a particular element $x_0 \in M$. 
	Then $M$ can be identified with the space $G/H$ of left cosets $gH$.
	Throughout the present paper, we assume the following: 
	\begin{enumerate}
	\item If $G$ is infinite, then $M$ is a connected Riemannian manifold and 
	$\tau$ is a constant times the natural distance on the manifold. 
	\item If $G$ is finite, and $d_0= \min \tau(x,y)$ for $x,y \in M$, $x\ne y$, 
	then $M$ has the structure of a graph in which $x$ is adjacent to $y$ if and only if $\tau(x,y)=d_0$, 
	and furthermore $\tau$ is a constant times the natural distance in the graph. 
	\end{enumerate}
	Under our assumptions, if $G$ is infinite then Wang \cite{Wang} proved that $M$ is 
	a sphere; real, complex or quaternionic projective space; or the Cayley projective plane. 
	The finite two-point-homogeneous spaces have not yet been completely classified. 
	
	Let $\mu$ be the Haar measure, which is invariant under $G$.
	This induces a unique invariant measure on $M$, which will also  be denoted by $\mu$. 
	We assume that $\mu$ is normalized so that $\mu(M)=1$.
	Let $L^2(G)$ denote the vector space of complex-valued functions $u$ on $G$, satisfying 
	\[
	\int_G |u(g)|^2 d\mu(g) < \infty
	\]
	with inner product 
	\[
	(u_1,u_2)=\int_G u_1(g) \overline{u_2(g)} d\mu(g). 
	\]
	Those $u \in L^2(G)$ that are constant on left cosets of $H$ can be regarded as belonging to $L^2(M)$, 
	which is defined similarly and has the inner product 
	\[
	(u_1,u_2)=\int_M u_1(x) \overline{u_2(x)} d\mu(x). 
	\]
	The space $L^2(M)$ decomposes into a countable direct sum of mutually orthogonal subspaces $\{ V_k \}_{k=0,1,\ldots}$ called (generalized) spherical harmonics. 
	Let $\{ \phi_{k,i} \}_{i=1}^{h_k}$ be an orthonormal basis for $V_k$, where $h_k= \dim V_k$. 
	Since $M$ is distance transitive, the function 
	\[
	\Phi_k(x,y):=\frac{1}{h_k}\sum_{i=1}^{h_k} \phi_{k,i}(x) \overline{\phi_{k,i}(y)}
	\]
	depends only on $\tau(x,y)$. 
	This expression is called the addition formula, and $\Phi_k(\tau)$ is called the zonal spherical function associated with $V_k$.  It is immediate from the definition that $\Phi_k$ is positive definite, that is, 
	\[
	\sum_{x \in X} \sum_{y\in X}\Phi_k(\tau(x,y)) \geq 0
	\]
	for any $X \subset M$.
	For all infinite $M$ and
for all currently known finite cases, $\{\Phi_i \}$ form families of classical orthogonal polynomials. We suppose that the degree of $\Phi_k$ is $k$. Note that $\Phi_k(\tau_0)=1$. 
	
	We define 
	\[
	D(X)=\{\tau(x,y) \mid x,y \in X, x\ne y \}
	\]
	for a finite set $X$ in a two-point-homogeneous space $M$. 
	The finite set $X$ is called an $s$-distance set (or $s$-code) if $|D(X)|=s$. 
	Let $A(M,s)$ be the maximum cardinality of $s$-distance sets in $M$.

\subsection{Delsarte's linear programming bound}    
	The following bound is known as Delsarte's linear programming bound, 
and give a good evaluation for some $D(X)$. 
	\begin{theorem}
	Let $X$ be an $s$-distance set with  $D(X)=\{d_1,d_2,\ldots,d_s\}$. Then
	\begin{align*}
	|X| \leq \max \{1+\alpha_1+\cdots + \alpha_s \mid & \sum_{i=1}^s \alpha_i \Phi_k(d_i) \geq -1, k\geq 0; \\ & \alpha_i \geq 0, i=1,2,\ldots,s \}.
	\end{align*}
	\end{theorem}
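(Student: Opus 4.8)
The plan is to produce, directly from the distance distribution of $X$, an explicit feasible point of the linear program on the right-hand side whose objective value equals $|X|$ exactly; since the maximum dominates every feasible objective value, the inequality follows at once. In other words, I will not construct an auxiliary polynomial at all, but simply observe that the (normalized) distance distribution of any $s$-distance set is automatically feasible for this LP.

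First I would record the combinatorial data of $X$. For each $d_i \in D(X)$ put
\[
n_i := \#\{(x,y) \in X \times X \mid x \ne y,\ \tau(x,y) = d_i\},
\]
so that $\sum_{i=1}^s n_i = |X|(|X|-1)$, and set $\beta_i := n_i/|X| \ge 0$. The objective value at $(\beta_1,\ldots,\beta_s)$ is then $1 + \sum_{i=1}^s \beta_i = 1 + (|X|-1) = |X|$, so everything reduces to verifying that $(\beta_1,\ldots,\beta_s)$ is feasible, i.e.\ that $\sum_{i=1}^s \beta_i \Phi_k(d_i) \ge -1$ for every $k \ge 0$.

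To check this I would split the double sum expressing the positive-definiteness of $\Phi_k$ into its diagonal and off-diagonal parts. Using $\tau(x,x) = \tau_0$ together with $\Phi_k(\tau_0) = 1$, the diagonal contributes $|X|$, and grouping the remaining terms by their distance value gives, for each $k \ge 1$,
\[
0 \le \sum_{x \in X}\sum_{y \in X} \Phi_k(\tau(x,y)) = |X| + \sum_{i=1}^s n_i\,\Phi_k(d_i);
\]
dividing by $|X|$ yields $\sum_{i=1}^s \beta_i \Phi_k(d_i) \ge -1$. For $k=0$ one has $\Phi_0 \equiv 1$ (a degree-$0$ function with $\Phi_0(\tau_0)=1$), so the left-hand side is $\sum_i \beta_i = |X|-1 \ge -1$ and the $k=0$ constraint is automatic. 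Hence $(\beta_1,\ldots,\beta_s)$ satisfies all the constraints, and $|X| = 1 + \sum_{i=1}^s \beta_i$ is at most the maximum in the statement.

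I do not expect a genuine obstacle here: the whole content is the remark that the normalized distance distribution of $X$ is always LP-feasible, and the verification is one line of positive-definiteness. The only points deserving a word of care are the identification of $\Phi_0$ with the constant $1$, which is what makes the $k=0$ constraint vacuous, and the clean extraction of the diagonal term via $\Phi_k(\tau_0)=1$; and, if one wants the symbol ``$\max$'' to be literally justified rather than ``$\sup$'', a brief remark that the feasible region is bounded (a standard fact, coming from the sign changes of the $\Phi_k$ at a nontrivial argument) closes the gap.
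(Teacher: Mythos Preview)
Your proof is correct and is exactly the standard Delsarte argument: the normalized distance distribution $(\beta_1,\ldots,\beta_s)=(n_1/|X|,\ldots,n_s/|X|)$ is feasible by positive-definiteness of the $\Phi_k$ together with $\Phi_k(\tau_0)=1$, and its objective value is $|X|$. Note, however, that the paper does not actually supply a proof of this theorem; it is simply stated as the known Delsarte linear programming bound, so there is no ``paper's own proof'' to compare against---your argument is the canonical one and would serve perfectly well as a proof here.
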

	The following is corresponding to the dual problem of the above linear programming problem. 
	\begin{theorem} \label{dual}
	Let $X$ be an $s$-distance set with $D(X)=\{d_1,d_2,\ldots,d_s\}$. Choose a natural number $m$.  Then 
	\begin{align*}
	|X| \leq \min \{1+f_1+ \cdots + f_m \mid & \sum_{k=1}^m f_k \Phi_k(d_i) \leq -1, i = 1,2,\ldots s; \\ 
	&  f_i \geq 0, i=1,2,\ldots,s \}.
	\end{align*}
	\end{theorem}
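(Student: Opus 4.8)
The plan is to prove this directly by the ``counting in two ways'' technique that underlies all linear programming bounds, rather than by appealing to strong LP duality with the primal bound above. I would fix any $(f_1,\dots,f_m)$ feasible for the minimisation problem --- so $f_k\geq 0$ for $k=1,\dots,m$ and $\sum_{k=1}^m f_k\Phi_k(d_i)\leq -1$ for $i=1,\dots,s$ --- form the test function
\[
F(\tau)=1+\sum_{k=1}^m f_k\,\Phi_k(\tau),
\]
and estimate the double sum $S:=\sum_{x\in X}\sum_{y\in X}F(\tau(x,y))$ from below and from above.

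For the lower bound I would expand
\[
S=|X|^2+\sum_{k=1}^m f_k\sum_{x\in X}\sum_{y\in X}\Phi_k(\tau(x,y)),
\]
and note that each inner double sum is nonnegative since $\Phi_k$ is positive definite (as observed right after the addition formula) and each $f_k\geq 0$; hence $S\geq |X|^2$. For the upper bound I would split $S$ according to whether $x=y$ or $x\neq y$. On the diagonal $\tau(x,x)=\tau_0$ and $\Phi_k(\tau_0)=1$, so every diagonal term equals $1+f_1+\cdots+f_m$ and there are $|X|$ of them; off the diagonal $\tau(x,y)$ is one of $d_1,\dots,d_s$, and feasibility gives $F(d_i)=1+\sum_{k=1}^m f_k\Phi_k(d_i)\leq 0$, so the off-diagonal contribution is at most $0$. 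This yields $S\leq |X|\,(1+f_1+\cdots+f_m)$.

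Combining the two estimates gives $|X|^2\leq |X|\,(1+f_1+\cdots+f_m)$, so dividing by $|X|\geq 1$ gives $|X|\leq 1+f_1+\cdots+f_m$. As $(f_1,\dots,f_m)$ was an arbitrary feasible point, I would then pass to the minimum of $1+f_1+\cdots+f_m$ over all feasible points; this minimum is attained, being the optimum of a finite-dimensional linear program whose objective is bounded below by $1$, and the asserted inequality is vacuously true when no feasible point exists. I do not expect a genuine obstacle here: the argument needs only positive definiteness of the zonal functions $\Phi_k$ and the normalisation $\Phi_k(\tau_0)=1$, both of which were recorded in the preceding subsection, and the one point worth a remark is simply that this statement is the ``easy'' half of the duality with the primal bound, so the self-contained computation above is preferable to invoking LP duality.
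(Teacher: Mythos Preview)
Your proof is correct and complete: the double sum $S$ is bounded below by $|X|^2$ via positive definiteness of each $\Phi_k$ and nonnegativity of the $f_k$, bounded above by $|X|(1+f_1+\cdots+f_m)$ via the diagonal/off-diagonal split and feasibility, and the result follows. The paper itself does not give a proof of this statement at all; it simply records the theorem and remarks that it ``corresponds to the dual problem'' of the primal linear programming bound stated just before. So your direct Delsarte-style double-counting argument is in fact more self-contained than what the paper offers, and --- as you anticipated --- it avoids any appeal to strong LP duality, needing only the two facts (positive definiteness of $\Phi_k$ and $\Phi_k(\tau_0)=1$) that were set up in the preceding subsection. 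One cosmetic point: the paper's statement has a typo in the last constraint (it should read $f_k\geq 0$ for $k=1,\dots,m$, not $i=1,\dots,s$); you have silently read it the intended way, which is fine.
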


\subsection{Harmonic absolute bound}
	The following upper bound was proved by Delsarte \cite{Delsarte_1,Delsarte_2,Levenshtein}. 
	\begin{theorem}\label{Delsarte}
	Let $X$ be an $s$-distance set in $M$. Then
	\[
	|X|\leq \sum_{i=0}^s h_i.
	\]
	\end{theorem}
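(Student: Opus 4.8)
The plan is to run the standard ``annihilator polynomial plus linear independence'' argument that underlies absolute bounds of this type. For each point $y\in X$ I would construct a function $F_y\colon M\to\mathbb{C}$ that, restricted to $X$, is supported exactly at $y$, and that lies in the finite-dimensional space $W:=\bigoplus_{k=0}^{s}V_k$. Since $\dim W=\sum_{i=0}^{s}h_i$, exhibiting $|X|$ linearly independent functions of this form inside $W$ gives $|X|\le\sum_{i=0}^{s}h_i$.

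Concretely, set
\[
F_y(x)=\prod_{j=1}^{s}\frac{\tau(x,y)-d_j}{\tau_0-d_j},
\]
where $D(X)=\{d_1,\dots,d_s\}$ and $\tau_0$ is the self-distance value at which $\Phi_k(\tau_0)=1$; note $\tau_0\neq d_j$ for every $j$, because the $d_j$ occur as distances between \emph{distinct} points, so $F_y$ is well defined. As a function of $\tau(x,y)$ this is a polynomial of degree $s$. Since $\deg\Phi_k=k$, the functions $\Phi_0,\dots,\Phi_s$ form a basis of the space of polynomials of degree $\le s$ in that variable, so there are constants $c_0,\dots,c_s$, independent of $y$, with
\[
F_y(x)=\sum_{k=0}^{s}c_k\,\Phi_k(\tau(x,y)).
\]
Now I invoke the addition formula in the form $\Phi_k(\tau(x,y))=\frac{1}{h_k}\sum_{i=1}^{h_k}\phi_{k,i}(x)\overline{\phi_{k,i}(y)}$: for each fixed $y$ the function $x\mapsto\Phi_k(\tau(x,y))$ is a linear combination of the $\phi_{k,i}$, hence lies in $V_k$. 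Therefore $F_y\in W$ for every $y\in X$.

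It remains to verify linear independence of $\{F_y\}_{y\in X}$. If $x,y\in X$ with $x\neq y$, then $\tau(x,y)=d_j$ for some $j$, so one factor of the product vanishes and $F_y(x)=0$; and $F_y(y)=\prod_{j}\frac{\tau_0-d_j}{\tau_0-d_j}=1$. Hence $F_y(x)=\delta_{x,y}$ on $X\times X$. Consequently, if $\sum_{y\in X}\lambda_y F_y=0$, evaluating at each $x\in X$ forces $\lambda_x=0$; thus the $F_y$ are linearly independent in $W$, and $|X|\le\dim W=\sum_{i=0}^{s}h_i$.

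The computations are routine; the only points needing genuine care are (i) that $\{\Phi_0,\dots,\Phi_s\}$ spans all polynomials of degree $\le s$ — which is exactly where the hypothesis $\deg\Phi_k=k$ enters — and (ii) reading the addition formula in the right direction, namely that $x\mapsto\Phi_k(\tau(x,y))$ belongs to $V_k$ (not merely that $\Phi_k$ is positive definite). Once these are in place the argument closes at once, and no property of $M$ beyond the harmonic decomposition $L^2(M)=\bigoplus_k V_k$ and the addition formula is used; this is the natural generalization of the Delsarte--Goethals--Seidel absolute bound for spherical codes.
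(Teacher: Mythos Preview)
Your argument is correct and is exactly the classical Delsarte annihilator-polynomial proof. The paper itself does not give a proof of this theorem at all; it simply cites Delsarte \cite{Delsarte_1,Delsarte_2} and Levenshtein \cite{Levenshtein}, and your write-up faithfully reconstructs the argument found in those references (and in the spherical case in \cite{Delsarte-Goethals-Seidel}). The two points you flag as needing care---that $\{\Phi_0,\dots,\Phi_s\}$ spans all polynomials of degree at most $s$, and that the addition formula places $x\mapsto\Phi_k(\tau(x,y))$ in $V_k$---are handled by the paper's standing assumptions (``We suppose that the degree of $\Phi_k$ is $k$'' and the addition formula as stated), so the proof closes without further input.
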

	Nozaki improved the above bound \cite{Nozaki-Shinohara}. 
	\begin{theorem}\label{Nozaki}
	Let $X$ be an $s$-distance set in $M$ with $D(X)=\{d_1,d_2,\ldots,d_s\}$.  
	Consider the polynomial $f(t)=\prod_{i=1}^s (d_i-t)/(d_i-\tau_0)$ and suppose that its expansion in the basis $\{\Phi_k\}$ has the form $f(t)=\sum_{i=0}^s f_i \Phi_i(t)$. 
	Then 
	\[
	|X| \leq \sum_{i:f_i>0} h_i. 
	\]
	\end{theorem}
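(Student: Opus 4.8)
The plan is to run the classical annihilator--polynomial argument of Delsarte, Goethals and Seidel, but to keep careful track of the signs of the coefficients $f_i$, so as to pay only for those harmonic spaces $V_i$ that actually occur with a positive coefficient.

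First I would record the two defining features of $f$. By construction the numerator $\prod_{i=1}^s(d_i-t)$ vanishes at each $t=d_j$, so $f(d_1)=\dots=f(d_s)=0$, while every factor of $f$ equals $1$ at $t=\tau_0$, so $f(\tau_0)=1$. Since $f$ has degree $s$ and each $\Phi_k$ has degree exactly $k$, the set $\{\Phi_0,\dots,\Phi_s\}$ is a basis of the space of polynomials of degree at most $s$, so the expansion $f=\sum_{i=0}^s f_i\Phi_i$ is legitimate and unique.

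Next, for $0\le k\le s$ introduce the $|X|\times|X|$ matrix $E_k=\bigl(\Phi_k(\tau(x,y))\bigr)_{x,y\in X}$ together with the $|X|\times h_k$ matrix $\Psi_k=\bigl(\phi_{k,i}(x)\bigr)_{x\in X,\,1\le i\le h_k}$. The addition formula gives $E_k=\tfrac{1}{h_k}\Psi_k\Psi_k^{\ast}$; hence $E_k$ is positive semidefinite and $\operatorname{rank}E_k\le h_k$. Evaluating $f=\sum_i f_i\Phi_i$ at $\tau(x,y)$ for all pairs $x,y\in X$ and using $f(d_j)=0$ together with $f(\tau_0)=1$ yields the matrix identity $\sum_{i=0}^s f_iE_i=I$, the identity matrix indexed by $X$. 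This identity, together with the positive definiteness of $\Phi_k$ (equivalently $E_k\succeq0$), is the heart of the proof; indeed it is the only place where the hypotheses on $X$ enter, namely that its distance set is exactly $\{d_1,\dots,d_s\}$ and that $f$ is normalized by $f(\tau_0)=1$.

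The final step is pure linear algebra. Write $\sum_{i=0}^s f_iE_i=P-N$ with $P=\sum_{i:\,f_i>0}f_iE_i$ and $N=\sum_{i:\,f_i\le0}(-f_i)E_i$; both $P$ and $N$ are positive semidefinite. Then $P=I+N\succeq I$, so $P$ is positive definite and therefore $\operatorname{rank}P=|X|$. On the other hand, subadditivity of rank gives $\operatorname{rank}P\le\sum_{i:\,f_i>0}\operatorname{rank}(f_iE_i)=\sum_{i:\,f_i>0}\operatorname{rank}E_i\le\sum_{i:\,f_i>0}h_i$. Comparing the two estimates yields $|X|\le\sum_{i:\,f_i>0}h_i$, as claimed. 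I expect the only point needing genuine care to be the verification of the identity $\sum_i f_iE_i=I$, but this is immediate once one observes that $f$ takes the value $0$ on every off-diagonal distance and $1$ on the diagonal; note also that the sign of $f_0$ never needs to be known, and that if every $f_i$ happens to be positive this recovers the absolute bound $|X|\le\sum_{i=0}^s h_i$ of Theorem~\ref{Delsarte}, any $f_i\le 0$ only improving it.
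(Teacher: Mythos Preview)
The paper does not actually prove this theorem; it merely quotes it from \cite{Nozaki-Shinohara}. Your argument is correct and is exactly the standard proof one finds in that reference: the annihilator polynomial $f$ gives the matrix identity $\sum_{i=0}^s f_i E_i = I$, the addition formula yields $E_k\succeq 0$ with $\operatorname{rank}E_k\le h_k$, and then moving the nonpositive terms to the right and bounding the rank of the positive part gives $|X|\le\sum_{i:f_i>0}h_i$.
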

	When the coefficients $f_i$ are all positive, the bound coincides with the bound in Theorem \ref{Delsarte}.  
	
\subsection{LRS type theorem}
	Let 
	\[
	N(M,s):=h_0+h_1+\cdots +h_{s-1}. 
	\]
	For $d_1,d_2,\ldots ,d_s$, we define the value 
	\[
	K_i:= \prod_{j\ne i} \frac{d_j-\tau_0}{d_j-d_i}
	\] 
	for each $i \in \{1,2,\ldots,s\}$.
	The following theorem is a good constraint to improve the upper bound \cite{Nozaki}. 
	\begin{theorem} \label{LRS}
	Let $X$ be an $s$-distance set in $M$ with $D(X)=\{d_1,d_2,\ldots, d_s\}$. 
	If $|X| \geq 2 N(M,s)$, then 
	$K_i$
	is an integer for each $i \in \{ 1,2,\ldots ,s \}$. Moreover, 
	$
	|K_i| \leq \lfloor 1/2+\sqrt{N(M,s)^2/(2N(M,s)-2) +1/4} \rfloor. 
	$
	\end{theorem}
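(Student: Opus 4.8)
The plan is to reduce the whole statement to an eigenvalue analysis of a single $0/1$ matrix. Fix $i\in\{1,\dots,s\}$ and set $q_i(t)=\prod_{j\ne i}\dfrac{t-d_j}{d_i-d_j}$, the Lagrange basis polynomial of degree $s-1$ for the nodes $d_1,\dots,d_s$; then $q_i(d_j)=\delta_{ij}$ and $q_i(\tau_0)=\prod_{j\ne i}\dfrac{d_j-\tau_0}{d_j-d_i}=K_i$. Since $\deg q_i\le s-1$ we expand $q_i=\sum_{k=0}^{s-1}c_{i,k}\Phi_k$ and form the symmetric matrix $Q_i=\big(q_i(\tau(x,y))\big)_{x,y\in X}$ of size $|X|$. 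On one side $Q_i=\sum_{k=0}^{s-1}c_{i,k}\big(\Phi_k(\tau(x,y))\big)_{x,y}$, and by the addition formula each summand is a scalar multiple of a matrix $\Psi_k\overline{\Psi_k}^{\,\top}$ with $\Psi_k$ of size $|X|\times h_k$, so it has rank at most $h_k$; hence $\operatorname{rank}Q_i\le h_0+\cdots+h_{s-1}=N(M,s)$. On the other side, evaluating entries gives $Q_i=K_iI+A_i$, where $A_i$ is the $0/1$ adjacency matrix of the graph on $X$ whose edges are the pairs at distance $d_i$. Thus $A_i+K_iI$ has rank $\le N(M,s)$, i.e. $-K_i$ is an eigenvalue of the integer matrix $A_i$ with multiplicity $m\ge|X|-N(M,s)\ge N(M,s)$, the last step being the hypothesis.

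Next I would deduce integrality. The characteristic polynomial of $A_i$ is monic with integer coefficients and vanishes to order $m$ at $-K_i$. If $K_i$ were irrational, every Galois conjugate of $-K_i$ would be a root of the same order $m$, so the characteristic polynomial would be divisible by $p(t)^m$ with $p$ the minimal polynomial of $-K_i$ of degree $\ge 2$; then $2m\le m\deg p\le|X|$, forcing $m=|X|/2$, $\deg p=2$ and the characteristic polynomial to equal $p(t)^{|X|/2}$. In that case $A_i$ has exactly two eigenvalues, which are nonzero conjugates, so $A_i^2$ is a scalar matrix; reading the diagonal shows the distance-$d_i$ graph is regular with no two vertices having a common neighbour, hence a perfect matching, whose eigenvalues are $\pm 1$ — contradicting irrationality. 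So $K_i\in\mathbb Q$, and as an eigenvalue of an integer matrix it is an algebraic integer, whence $K_i\in\mathbb Z$.

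For the numerical bound I would combine the same multiplicity information with trace identities. We have $\operatorname{tr}A_i=0$, while $\operatorname{tr}A_i^2$ counts the ordered pairs at distance $d_i$; if $\bar A_i=J-I-A_i$ denotes the complementary $0/1$ matrix then also $\operatorname{tr}\bar A_i=0$ and $\operatorname{tr}A_i^2+\operatorname{tr}\bar A_i^2=|X|(|X|-1)$. Removing the rank-one $\Phi_0=J$ term, the compression $\tilde A_i$ of $A_i$ to the hyperplane $\mathbf 1^\perp$ still carries $-K_i$ as an eigenvalue of multiplicity $\ge|X|-N(M,s)$, and $\tilde{\bar A}_i=-I-\tilde A_i$ carries $K_i-1$ with the same multiplicity. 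Inserting the values of $\operatorname{tr}\tilde A_i$ and $\operatorname{tr}\tilde A_i^2$ (which are explicit in the degree sequence of $A_i$) into the Cauchy–Schwarz inequality applied to the at most $N(M,s)-1$ eigenvalues of $\tilde A_i$ other than $-K_i$, and doing the same for $\tilde{\bar A}_i$, should yield after simplification and the use of $|X|\ge 2N(M,s)$ the inequality $|K_i|(|K_i|-1)\le \dfrac{N(M,s)^2}{2N(M,s)-2}$, which for integral $K_i$ is exactly the stated bound.

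The hard part will be this last step. A crude estimate such as $\operatorname{tr}A_i^2\le|X|(|X|-1)$ only gives $|K_i|=O(\sqrt{N(M,s)})$, so getting the sharp constant really does require the passage to $\mathbf 1^\perp$, the simultaneous use of $A_i$ and $\bar A_i$, and some care with the degree sequence of the distance-$d_i$ graph; one should also dispose separately of the trivial cases $K_i\in\{0,1\}$ and of the boundary case $|X|=2N(M,s)$.
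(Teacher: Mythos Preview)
The paper does not give its own proof of this theorem; it quotes the result from \cite{Nozaki}. Your integrality argument is exactly the standard one underlying that reference: form the Lagrange matrix $Q_i=K_iI+A_i$, bound its rank by $N(M,s)$ via the addition formula, and conclude that the $0/1$ matrix $A_i$ has $-K_i$ as an eigenvalue of multiplicity at least $|X|-N(M,s)\ge N(M,s)$. Your Galois argument and the disposal of the residual quadratic case via $A_i^2=\alpha^2 I$ are correct and complete. So for the first assertion you are in line with the cited proof.

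For the numerical bound your write-up is, as you yourself say, only a plan. Two points deserve attention. First, passing to the compression $\tilde A_i$ on $\mathbf 1^\perp$ is legitimate for the rank statement (because the $\Phi_0$-term contributes only $J$, so $PQ_iP$ has rank $\le N(M,s)-1$), but $\tilde A_i$ is \emph{not} obtained from $A_i$ by deleting one eigenvalue unless the distance-$d_i$ graph is regular; its traces pick up the degree sequence in a way you have not yet controlled, so the Cauchy--Schwarz step needs a genuine extra idea rather than ``simplification''. Second, playing $A_i$ off against $J-I-A_i$ presumes $s=2$; for $s\ge 3$ the complement splits among several distance graphs and the clean identity $\operatorname{tr}A_i^2+\operatorname{tr}\bar A_i^2=|X|(|X|-1)$ no longer isolates $K_i$. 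In Nozaki's proof the bound on $|K_i|$ is obtained not through trace inequalities but by exploiting directly that the $|X|$ functions $y\mapsto q_i(\tau(\cdot,y))$, each taking the value $K_i$ once and $0/1$ elsewhere, live in an $N(M,s)$-dimensional space; a short dependency argument on these functions gives $|K_i|(|K_i|-1)\le N(M,s)^2/(2N(M,s)-2)$ without any appeal to the degree sequence. Recasting your eigenvalue set-up in that function-space language would close the gap.
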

	The numbers $K_i$ have the following properties. 
	
	\begin{theorem} \label{sum_Ki}
	For any $j \in \{0,1, \ldots s-1\}$, we have $\sum_{i=1}^s d_i^j K_i=\tau_0^j$.
	\end{theorem}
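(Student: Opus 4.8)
The plan is to recognize the quantities $K_i$ as values of Lagrange basis polynomials and then invoke the exactness of Lagrange interpolation on polynomials of degree at most $s-1$.

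First I would fix the $s$ distinct numbers $d_1,\ldots,d_s$ and introduce the associated Lagrange basis polynomials
\[
\ell_i(t)=\prod_{j\ne i}\frac{t-d_j}{d_i-d_j},\qquad i=1,\ldots,s,
\]
each of degree $s-1$ and satisfying $\ell_i(d_k)=\delta_{ik}$. The key elementary observation is that
\[
\ell_i(\tau_0)=\prod_{j\ne i}\frac{\tau_0-d_j}{d_i-d_j}=\prod_{j\ne i}\frac{d_j-\tau_0}{d_j-d_i}=K_i,
\]
where the middle equality just cancels a factor $(-1)$ in numerator and denominator of each term of the product. So $K_i=\ell_i(\tau_0)$.

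Next I would recall that Lagrange interpolation is exact on polynomials of degree $\le s-1$: for any such polynomial $p$ one has $p(t)=\sum_{i=1}^s p(d_i)\,\ell_i(t)$, since both sides are polynomials of degree $\le s-1$ agreeing at the $s$ points $d_1,\ldots,d_s$. Applying this with $p(t)=t^j$ for a fixed $j\in\{0,1,\ldots,s-1\}$ gives the identity $t^j=\sum_{i=1}^s d_i^{\,j}\,\ell_i(t)$ of polynomials. Evaluating at $t=\tau_0$ and using $K_i=\ell_i(\tau_0)$ yields
\[
\tau_0^{\,j}=\sum_{i=1}^s d_i^{\,j}\,\ell_i(\tau_0)=\sum_{i=1}^s d_i^{\,j}K_i,
\]
which is the claim.

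There is essentially no hard step here; the only thing to be careful about is the sign bookkeeping in verifying $K_i=\ell_i(\tau_0)$, and the (standard) fact that interpolation at $s$ nodes reproduces every polynomial of degree $\le s-1$ exactly, which is why the restriction $j\le s-1$ is needed. One could alternatively phrase the whole argument via the Vandermonde system $\sum_i d_i^{\,j}x_i=\tau_0^{\,j}$ ($0\le j\le s-1$), whose unique solution is $x_i=K_i$, but the Lagrange formulation makes the identity transparent.
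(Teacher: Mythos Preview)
Your proof is correct and follows essentially the same approach as the paper: both use Lagrange interpolation at the nodes $d_1,\ldots,d_s$ to identify $\sum_{i=1}^s d_i^{\,j}\ell_i(t)$ with $t^j$ (the paper calls this polynomial $L_j$) and then evaluate at $t=\tau_0$. Your version is slightly more explicit about the sign check $K_i=\ell_i(\tau_0)$ and the final evaluation, but the argument is the same.
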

	
	\begin{proof}
	For each $j\in \{1,2,\ldots,s\}$, we define the polynomial 
\[L_j(x):=\sum^s_{i=1}d_i^j\prod_{k\neq i}\frac{x-d_k}{d_i-d_k}\] 
	of degree at most $s-1$. 
	Then the property $L_j(d_i)=d_i^j$ holds for any $i\in \{1,2,\ldots ,s\}$. The polynomial of degree at most $s-1$, that is interpolating distinct $s$ points, is unique. Therefore we can determine $L_j(x)=x^j$.
	\end{proof}
	\begin{corollary} \label{coro}
		\begin{enumerate}
			\item When $s=2$, we have  
			\[
			d_1=\frac{\tau_0-d_2 K_2}{K_1}. 
			\]
			\item When $s=3$, if $d_1>d_2$, then 
			\begin{align*}
				d_1&= \frac{\tau_0 K_1 - d_3 K_1 K_3 - (d_3-\tau_0)\sqrt{- K_1K_2K_3}}{K_1(K_1+K_2)},\\
				d_2&=\frac{\tau_0 K_2- d_3 K_2 K_3 + (d_3-\tau_0)\sqrt{- K_1K_2K_3}}{K_2(K_1+K_2)}.
			\end{align*}
		\end{enumerate}
	\end{corollary}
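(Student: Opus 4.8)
The plan is to read off both formulas from Theorem~\ref{sum_Ki}, which for an $s$-distance set provides the linear relations $\sum_{i=1}^{s} d_i^j K_i = \tau_0^j$ for $j = 0, 1, \dots, s-1$.

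For part~(1), set $s = 2$. The cases $j = 0$ and $j = 1$ are $K_1 + K_2 = 1$ and $d_1 K_1 + d_2 K_2 = \tau_0$; solving the second equation for $d_1$ gives $d_1 = (\tau_0 - d_2 K_2)/K_1$ at once.

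For part~(2), set $s = 3$ and write out the cases $j = 0, 1, 2$:
\begin{align*}
K_1 + K_2 + K_3 &= 1, \\
d_1 K_1 + d_2 K_2 + d_3 K_3 &= \tau_0, \\
d_1^2 K_1 + d_2^2 K_2 + d_3^2 K_3 &= \tau_0^2 .
\end{align*}
I would treat $d_3$, the $K_i$, and $\tau_0$ as known and solve the last two relations for the unknowns $d_1, d_2$, using the first relation only in the form $K_1 + K_2 = 1 - K_3$. From the second relation, $d_2 = (\tau_0 - d_3 K_3 - d_1 K_1)/K_2$; substituting this into the third and clearing the denominator $K_2$ yields a single quadratic equation in $d_1$,
\[
K_1(K_1 + K_2)\, d_1^2 - 2 K_1 (\tau_0 - d_3 K_3)\, d_1 + \bigl( (\tau_0 - d_3 K_3)^2 - K_2(\tau_0^2 - d_3^2 K_3) \bigr) = 0 .
\]
The main point is the simplification of its discriminant: invoking $K_1 + K_2 + K_3 = 1$, it collapses to $-4 K_1 K_2 K_3 (d_3 - \tau_0)^2$, the key step being the identity $(1 - K_3)(\tau_0^2 - d_3^2 K_3) - (\tau_0 - d_3 K_3)^2 = -K_3 (d_3 - \tau_0)^2$. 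The quadratic formula then gives
\[
d_1 = \frac{\tau_0 K_1 - d_3 K_1 K_3 \pm (d_3 - \tau_0)\sqrt{-K_1 K_2 K_3}}{K_1(K_1 + K_2)} ,
\]
and feeding this back into $d_2 = (\tau_0 - d_3 K_3 - d_1 K_1)/K_2$ produces the matching expression for $d_2$ carrying the opposite sign before the radical.

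It remains to decide which sign belongs to $d_1$. The two sign choices are genuinely different solution pairs, swapped by flipping the sign of $\sqrt{-K_1 K_2 K_3}$; a one-line computation gives $d_1 - d_2 = -(d_3 - \tau_0)\sqrt{-K_1 K_2 K_3}/(K_1 K_2)$ for the pair written in the statement and the negative of this for the other pair, so exactly one of them obeys $d_1 > d_2$, and that is the pair asserted. I expect the only real work to be the discriminant identity — the one spot where $\sum_i K_i = 1$ is genuinely needed — whereas the elimination and the sign bookkeeping are routine.
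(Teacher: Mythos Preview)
Your proposal is correct and follows exactly the approach the paper indicates: the paper's entire proof is the single sentence ``We solve the system of equations given by Theorem~\ref{sum_Ki},'' and you have carried out that solution in detail. Your explicit discriminant computation and sign discussion are more than the paper supplies, but they are accurate and appropriate.
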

	\begin{proof}
	We solve the system of equations given by Theorem \ref{sum_Ki}
	\end{proof}
	\begin{remark}
	For $s \geq 4$, there is no simple solution of the system of equations given by Theorem \ref{sum_Ki}.  
	\end{remark}
	\begin{corollary}
	If $d_1>d_2> \cdots >d_s>\tau_0$ $(${\it i.e.} $\tau(\rho)$ is a monotone increasing function$)$ or
 $d_1<d_2<\cdots <d_s<\tau_0$ $(${\it i.e.} $\tau(\rho)$ is a monotone decreasing function$)$, then $|K_1|<|K_2|$. 
	\end{corollary}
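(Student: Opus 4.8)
The plan is to reduce the statement to an elementary magnitude estimate for the ratio $K_1/K_2$. Expanding $K_1=\prod_{j\ge 2}\frac{d_j-\tau_0}{d_j-d_1}$ and $K_2=\frac{d_1-\tau_0}{d_1-d_2}\prod_{j\ge 3}\frac{d_j-\tau_0}{d_j-d_2}$ straight from the definition, the factors $d_j-\tau_0$ with $j\ge 3$ occur in both products and cancel, and $\frac{d_1-d_2}{d_2-d_1}=-1$, so
\[
\frac{K_1}{K_2}=-\,\frac{d_2-\tau_0}{d_1-\tau_0}\;\prod_{j=3}^{s}\frac{d_j-d_2}{d_j-d_1},
\]
where the product is empty (equal to $1$) when $s=2$. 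The monotonicity hypotheses force $d_j\ne\tau_0$ for every $j$, hence $K_1,K_2\ne 0$ and the division is legitimate; so it suffices to show that every factor on the right-hand side has absolute value strictly between $0$ and $1$.

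I would then check this factor by factor, in the two cases. If $d_1>d_2>\cdots>d_s>\tau_0$, then $0<d_2-\tau_0<d_1-\tau_0$, so $0<\frac{d_2-\tau_0}{d_1-\tau_0}<1$; and for each $j\ge 3$ the chain $d_j<d_2<d_1$ gives $d_j-d_1<d_j-d_2<0$, whence $\frac{d_j-d_2}{d_j-d_1}>0$ and, on comparing absolute values, $\frac{d_j-d_2}{d_j-d_1}<1$. The case $d_1<d_2<\cdots<d_s<\tau_0$ is symmetric: $d_1-\tau_0<d_2-\tau_0<0$ and, for $j\ge 3$, $0<d_j-d_2<d_j-d_1$, which yields the same two conclusions. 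In either case $|K_1/K_2|$ is a product of numbers lying strictly in $(0,1)$, so $|K_1/K_2|<1$, i.e.\ $|K_1|<|K_2|$.

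To present both cases uniformly I might instead set $c_j:=d_j-\tau_0$; then $K_i=\prod_{j\ne i}\frac{c_j}{c_j-c_i}$, and both hypotheses say precisely that the numbers $c_j$ share a common sign with $|c_1|>|c_2|>\cdots>|c_s|>0$. In that language $\frac{K_1}{K_2}=-\frac{c_2}{c_1}\prod_{j\ge 3}\frac{c_j-c_2}{c_j-c_1}$; here $|c_2/c_1|<1$ is immediate, and $|c_j-c_1|>|c_j-c_2|$ holds because $c_2$ lies strictly between $c_1$ and $c_j$ on the real line, so $|c_j-c_1|=|c_j-c_2|+|c_2-c_1|$. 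I do not anticipate a genuine obstacle: the entire argument is a sign-and-size bookkeeping exercise, and the only mild point to keep in mind is that the normalisation of the metric places $\tau_0$ strictly outside the set of attained distances, which is exactly what the monotonicity assumption encodes.
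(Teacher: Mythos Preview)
Your proof is correct and follows exactly the paper's approach: compute $|K_1/K_2|$ as the product $\left|\frac{\tau_0-d_2}{\tau_0-d_1}\right|\prod_{j\ge 3}\left|\frac{d_j-d_2}{d_j-d_1}\right|$ and observe that each factor lies in $(0,1)$. The paper states this in a single displayed line and calls it immediate, whereas you spell out the cancellation and the factor-by-factor estimates, but the argument is identical.
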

	\begin{proof}
	This is immediate because 
	\[
	\left| \frac{K_1}{K_2} \right| = \left| \frac{\tau_0-d_2}{\tau_0-d_1} \cdot \frac{d_3-d_2}{d_3-d_1} \cdot \cdots  \cdot \frac{d_s-d_2}{d_s-d_1}  \right| < 1. 
	\]
	\end{proof}
\subsection{New bounds}
	Let $\mathfrak{D}(M,s)$ be the set of all possible $s$ distances $D(X)=\{d_1,d_2, \ldots ,  d_s \}$ satisfying that $K_i$ are integers.  
	For each $D \in \mathfrak{D}(M,s)$, we have the two bounds, those are the harmonic absolute bound $H(D)$ in Theorem \ref{Nozaki}, and Delsarte's linear programming bound $L(D)$.  
	Then the following immediately holds. 
	
	\begin{theorem} \label{new}
	Let $B(D):=\min \{H(D),L(D) \}$ for $D \in \mathfrak{D}(M,s)$. Then   
	\[
	A(M,s) \leq \max_{D \in \mathfrak{D}(M,s)} \{ B(D), 2N(M,s)-1 \}.
	\]
	\end{theorem}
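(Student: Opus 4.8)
The plan is to argue by a simple dichotomy on the cardinality of an arbitrary $s$-distance set $X\subset M$, using the threshold $2N(M,s)$ from the LRS type theorem (Theorem \ref{LRS}) to split into two cases.

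First I would fix an $s$-distance set $X$ in $M$ and put $D:=D(X)=\{d_1,\ldots,d_s\}$. If $|X|\le 2N(M,s)-1$, there is nothing to prove, since the right-hand side of the asserted inequality is at least $2N(M,s)-1$. So the remaining case is $|X|\ge 2N(M,s)$.

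In that case Theorem \ref{LRS} applies and tells us that each $K_i=\prod_{j\ne i}(d_j-\tau_0)/(d_j-d_i)$ is an integer; but that is exactly the defining condition for $D$ to lie in $\mathfrak{D}(M,s)$, so $D\in\mathfrak{D}(M,s)$. Now I would feed this $D$ into the two bounds that are valid for any $s$-distance set with that distance set: Theorem \ref{Nozaki} gives $|X|\le H(D)$, where $H(D)$ is the harmonic absolute bound coming from the annihilator polynomial $f(t)=\prod_{i=1}^s(d_i-t)/(d_i-\tau_0)$, and Delsarte's linear programming bound gives $|X|\le L(D)$. Hence $|X|\le\min\{H(D),L(D)\}=B(D)\le\max_{D'\in\mathfrak{D}(M,s)}B(D')$.

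Putting the two cases together, every $s$-distance set $X$ in $M$ satisfies $|X|\le\max\{\max_{D\in\mathfrak{D}(M,s)}B(D),\,2N(M,s)-1\}$, and taking the maximum over all such $X$ yields the claimed bound on $A(M,s)$. (If $\mathfrak{D}(M,s)=\emptyset$ the second case cannot occur, and $A(M,s)\le 2N(M,s)-1$ holds a fortiori.) There is no real obstacle in the argument itself: the theorem merely repackages three previously established facts — the LRS type theorem, Nozaki's harmonic absolute bound, and Delsarte's linear programming bound — and the only point to watch is interpreting the outer maximum so that it correctly covers the degenerate situation where no distance set with integral $K_i$ exists.
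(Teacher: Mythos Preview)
Your argument is correct and is exactly the intended one: the paper itself presents the theorem as immediate from Theorems \ref{LRS}, \ref{Nozaki}, and the linear programming bound, and your dichotomy at the threshold $2N(M,s)$ is the natural way to unpack that.
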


\section{Bounds on sets with few distances} 
\subsection{Hamming space}
	In this section, we deal with the Hamming space $\mathbb{F}_2^n$ with the Hamming distance $\tau(x,y):=|\{i \mid x_i \ne y_i\}|$ where $x=(x_1,x_2,\ldots, x_n)$ and $y=(y_1,y_2,\ldots,y_n)$. Then $\Phi_k$ is the Krawtchouk polynomial of degree $k$:
	\[
	\Phi_k(x):=\binom{n}{k}^{-1}\sum_{j=0}^k (-1)^j \binom{x}{j} \binom{n-x}{k-j}.
	\]
	We have $h_i=q^{-n}\binom{n}{i}(q-1)^i$. 
	
	When $2s \leq n$, we can construct an $s$-distance set in $\mathbb{F}_2^n$ with $\sum_{i=0}^{\lfloor s/2 \rfloor} \binom{n}{s-2i}$ points. 
	Namely, the example consists of all vectors having $k$ ones for all $k \equiv s \mod 2$. We obtain a lower bound 
	\begin{equation}\label{low_hamming}
	A(\mathbb{F}_2^n,s) \geq \sum_{i=0}^{\lfloor \frac{s}{2} \rfloor} \binom{n}{s-2i} 
	\end{equation}
	for $2s \leq n$. 
	
	Maximum two-distance sets are studied in \cite{Barg-Musin}.
	\begin{theorem}
	If $6 \leq n \leq 74$ with the exception of the values $n=47,53,59,65,70,71$, 
	or if $n=78$, then $A(\mathbb{F}_2^n,2) \leq (n^2-n+2)/2$. 
	\end{theorem}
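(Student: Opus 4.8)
The plan is to run the machinery of Theorem \ref{new} for $M=\mathbb{F}_2^n$, $s=2$. Here $h_0=1$, $h_1=n$, $h_2=\binom{n}{2}$, so $N(\mathbb{F}_2^n,2)=n+1$ and $2N(\mathbb{F}_2^n,2)-1=2n+1$, which is smaller than $(n^2-n+2)/2$ for every $n\geq 6$ (the difference equals $n(n-5)/2$). Thus it suffices to consider a two-distance set $X$ with $|X|\geq 2N(\mathbb{F}_2^n,2)$ and to show $|X|\leq(n^2-n+2)/2$. For such $X$ the LRS-type Theorem \ref{LRS} applies. Since $\tau_0=0$, Theorem \ref{sum_Ki} gives $K_1+K_2=1$ and $d_1K_1+d_2K_2=0$, whence $K_1=d_2/(d_2-d_1)$ and $K_2=d_1/(d_1-d_2)$; writing $d_1/d_2$ in lowest terms and using $K_1,K_2\in\mathbb{Z}$ forces
\[
d_1=gk,\qquad d_2=g(k-1),\qquad \{|K_1|,|K_2|\}=\{k-1,k\}
\]
for positive integers $g$ and $k\geq 2$. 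The magnitude part of Theorem \ref{LRS}, $|K_i|\leq\lfloor 1/2+\sqrt{(n+1)^2/(2n)+1/4}\rfloor$, gives $k\leq 6$ throughout $6\leq n\leq 78$, so only finitely many distance sets $D=\{d_1,d_2\}$ survive for each $n$.

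The next step is to expand $f(t)=(d_1-t)(d_2-t)/(d_1d_2)$ in the Krawtchouk basis $\{\Phi_0,\Phi_1,\Phi_2\}$. Matching the coefficients of $1,t,t^2$ (using $\Phi_1(t)=1-2t/n$ and $\Phi_2(t)=\bigl(2t^2-2nt+\binom{n}{2}\bigr)/\binom{n}{2}$) yields
\[
f_2=\frac{n(n-1)}{4\,d_1d_2}>0,\qquad f_1=\frac{n(d_1+d_2-n)}{2\,d_1d_2},\qquad f_0=\frac{(2d_1-n)(2d_2-n)+n}{4\,d_1d_2}.
\]
Hence $f_1\leq 0$ precisely when $d_1+d_2\leq n$. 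In that case Theorem \ref{Nozaki} gives $|X|\leq h_0+h_2=1+\binom{n}{2}=(n^2-n+2)/2$ when $f_0>0$, and $|X|\leq h_2=\binom{n}{2}$ when $f_0\leq 0$; either way the desired inequality holds. So every surviving $D$ with $d_1+d_2\leq n$ is disposed of analytically, and only those with $d_1>d_2$ and $d_1+d_2>n$ remain.

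There are only finitely many such $D$, and they are quite restricted: $d_1=gk\leq n$ with $g(2k-1)>n$ and $2\leq k\leq 6$. If moreover $d_2>n/2$, then $\Phi_1(d_1),\Phi_1(d_2)<0$ and the dual bound of Theorem \ref{dual} with $m=1$ (taking $f_1=n/(2d_2-n)$) already gives $|X|\leq 1+n/(2d_2-n)\leq(n^2-n+2)/2$, the last inequality holding for every integer $d_2>n/2$. The truly remaining pairs have $d_2\leq n/2<d_1$ and $d_1+d_2>n$; for each of these I would compute Delsarte's linear programming bound $L(D)$ via the dual program of Theorem \ref{dual} with $m=n$ (the relevant data being the finitely many numbers $\Phi_k(d_1),\Phi_k(d_2)$ for $0\leq k\leq n$) and check $L(D)\leq(n^2-n+2)/2$. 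Carrying this out for all $n$ with $6\leq n\leq 74$ and for $n=78$ gives the claim; the bound is violated for at least one such pair exactly when $n\in\{47,53,59,65,70,71\}$, which is the source of the exceptions.

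The short, conceptual part of the argument is the sign computation for $f_1$, which reduces the whole problem to the finitely many ``large-distance'' pairs. The main obstacle is the last step: it is a genuinely computer-assisted verification, requiring one to enumerate the surviving distance pairs for roughly seventy values of $n$, evaluate Delsarte's linear programming bound for each, and keep careful track of the failures in order to certify that $\{47,53,59,65,70,71\}$ is exactly the exceptional set and that $n=78$ survives.
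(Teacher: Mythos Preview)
The paper does not actually prove this theorem; it is quoted as a known result from \cite{Barg-Musin}, so there is no in-paper proof to compare against. Your reconstruction is correct and is precisely the Barg--Musin method carried out through the framework of this paper: use Theorem \ref{LRS} (with $\tau_0=0$, $N=n+1$) to reduce to pairs $\{gk,g(k-1)\}$ with bounded $k$, dispose of the case $d_1+d_2\le n$ via the sign of $f_1$ in Theorem \ref{Nozaki}, and finish the remaining finitely many pairs with the linear programming bound of Theorem \ref{dual}. Your auxiliary computations (the Krawtchouk expansion of $f$, the $m=1$ dual bound when $d_2>n/2$, and the count $(n^2-n+2)/2-(2n+1)=n(n-5)/2$) are all correct, and you rightly identify the final LP step as a finite, computer-assisted check producing the six exceptional values.
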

	
	We determine the maximum cardinalities of three- or four-distance sets in $\mathbb{F}_2^n$ for some $n$. 
	\begin{theorem} \label{hamming}
	\begin{enumerate}
		\item If $8 \leq n \leq 22$, $24 \leq n \leq 33$, or $n=36,37,44$, then $A(\mathbb{F}_2^n,3) = n+ \binom{n}{3}$. 
		\item If $10 \leq n \leq 47$, then $A(\mathbb{F}_2^n,4) = 1+\binom{n}{2}+ \binom{n}{4}$. 
	\end{enumerate}
	\end{theorem}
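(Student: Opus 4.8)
The two lower bounds are exactly \eqref{low_hamming} for $s=3,4$: the set of all binary vectors of weight $1$ or $3$ is a $3$-distance set with distance set $\{2,4,6\}$ and $n+\binom n3$ points, and the set of all vectors of weight $0$, $2$, or $4$ is a $4$-distance set with distance set $\{2,4,6,8\}$ and $1+\binom n2+\binom n4$ points. So the task is the matching upper bound, and the plan is to invoke Theorem~\ref{new} with $M=\mathbb{F}_2^n$ and $s\in\{3,4\}$: it suffices to check that $2N(\mathbb{F}_2^n,s)-1$ and $B(D)=\min\{H(D),L(D)\}$, for every $D$ in the finite set $\mathfrak{D}(\mathbb{F}_2^n,s)$, are all at most $n+\binom n3$ (respectively $1+\binom n2+\binom n4$).

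First I would dispose of the term $2N(\mathbb{F}_2^n,s)-1$. Since $h_i=\binom ni$ here, we have $N(\mathbb{F}_2^n,3)=1+n+\binom n2$ and $N(\mathbb{F}_2^n,4)=1+n+\binom n2+\binom n3$, so an elementary comparison of polynomials in $n$ gives $2N(\mathbb{F}_2^n,3)-1=n^2+n+1<n+\binom n3$ for every $n\ge 9$, and $2N(\mathbb{F}_2^n,4)-1<1+\binom n2+\binom n4$ for every $n\ge 13$. Hence this term is harmless throughout the stated ranges with the sole exceptions $n=8$ when $s=3$ and $n=10,11,12$ when $s=4$, which I treat separately below.

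Now to $B(D)$ for $D\in\mathfrak{D}(\mathbb{F}_2^n,s)$. As Hamming distances are integers in $\{1,\dots,n\}$ the set $\mathfrak{D}(\mathbb{F}_2^n,s)$ is finite, and the integrality of the $K_i$, the explicit size bound $|K_i|\le\lfloor 1/2+\sqrt{N^2/(2N-2)+1/4}\rfloor$ of Theorem~\ref{LRS}, and the relations $\sum_i K_i=1$, $\sum_i d_iK_i=0$, $\sum_i d_i^2K_i=0$ of Theorem~\ref{sum_Ki} (supplemented for $s=3$ by the closed forms of Corollary~\ref{coro}, while for $s=4$ one simply sieves all $s$-subsets) reduce it to a short explicit list for each $n$. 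For each admissible $D=\{d_1,\dots,d_s\}$ I would compute $H(D)$ from Theorem~\ref{Nozaki}, by expanding $f(t)=\prod_{i=1}^s (d_i-t)/d_i$ in Krawtchouk polynomials and summing the $h_i$ over the indices $i$ with $f_i>0$, and compute a certified value of $L(D)$ by exhibiting a feasible point of the dual program of Theorem~\ref{dual}; then $B(D)=\min\{H(D),L(D)\}$. One convenient point handles the extremal configurations: for $D=\{2,4,6\}$ the polynomial $f$ is antisymmetric about $t=n/2$, and for $D=\{2,4,6,8\}$ with $n=10$ it is symmetric, so by $\Phi_k(n-t)=(-1)^k\Phi_k(t)$ only the odd-degree, respectively even-degree, coefficients $f_i$ can be nonzero, giving $H(D)\le h_1+h_3=n+\binom n3$, respectively $H(D)\le h_0+h_2+h_4=1+\binom n2+\binom n4$, with equality since the construction attains these. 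For the remaining $n$ in part~(2) one must still check that $B(\{2,4,6,8\})$ equals the target, and for every other admissible $D$ on the list that $B(D)<$ target; here $H(D)$ alone need not suffice, since it can be as large as $\sum_{i=0}^s h_i$, so the dual-certified value $L(D)$ is used wherever needed.

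The main obstacle is twofold. The first part is the finite-but-large certification just outlined: one must verify, uniformly over all $n$ in the ranges and all admissible $D$, that $B(D)\le$ target; this is precisely where Theorem~\ref{LRS} is indispensable, since without the integrality and size constraints on the $K_i$ there would be $\binom ns$ candidate distance sets and the unconstrained harmonic and linear programming bounds would overshoot the target as $n$ grows. The second part is the exceptional dimensions $n=8$ ($s=3$) and $n=10,11,12$ ($s=4$), where $2N(\mathbb{F}_2^n,s)-1$ exceeds the target, so Theorem~\ref{new} does not by itself rule out an $s$-distance set of size strictly between the target and $2N(\mathbb{F}_2^n,s)$. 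For these I would argue directly, using that the bounds of Theorem~\ref{Nozaki} and of Delsarte's linear program hold for \emph{every} $s$-distance set with no appeal to the LRS constraint, and checking that $\min\{H(D),L(D)\}\le$ target for \emph{all} $s$-subsets $D\subseteq\{1,\dots,n\}$; equivalently, one may bound $A(\mathbb{F}_2^n,s)$ above by the maximum size of an $s$-distance set on $S^{n-1}$, into which $\mathbb{F}_2^n$ embeds isometrically up to scaling, and quote the corresponding spherical bound.
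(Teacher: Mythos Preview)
Your overall approach is the paper's: apply Theorem~\ref{new} to the finite set $\mathfrak{D}(\mathbb{F}_2^n,s)$ and match against the lower bound~\eqref{low_hamming}. The paper's own proof is a two-sentence sketch that cites Barg--Musin for the small dimensions ($8\le n\le 22$ and $n=24$ in part~(1), $10\le n\le 24$ in part~(2)) and for the remaining $n$ simply says ``apply Theorem~\ref{new},'' so the case-by-case verification you outline is exactly what is implicitly being done there.

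One point needs correction. You assert that for $D=\{2,4,6\}$ the annihilator $f(t)=(2-t)(4-t)(6-t)/48$ is antisymmetric about $t=n/2$, and hence that $H(\{2,4,6\})\le h_1+h_3$. But $f$ is a fixed cubic, antisymmetric about its middle root $t=4$, and $4=n/2$ only when $n=8$. For $n>8$ the even-degree Krawtchouk coefficients $f_0,f_2$ are in general nonzero, so the parity shortcut does not give $H(\{2,4,6\})\le n+\binom n3$; the crucial inequality $B(\{2,4,6\})\le n+\binom n3$ has to be established by the same computation you prescribe for every other $D$ (compute $H(D)$, and where that overshoots, exhibit a feasible dual point for $L(D)$). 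You already flag this for $D=\{2,4,6,8\}$ with $n>10$; the identical caveat applies to $s=3$ and $n>8$. This is not a change of strategy, only a correction to which cases the parity trick actually handles.

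On the exceptional small $n$ where $2N(\mathbb{F}_2^n,s)-1$ exceeds the target: your proposal to bypass Theorem~\ref{LRS} and check $\min\{H(D),L(D)\}\le\text{target}$ over \emph{all} $s$-subsets $D\subseteq\{1,\dots,n\}$ is valid, and is a finite computation. The paper does not do this; it simply defers those cases to Barg--Musin. The spherical-embedding alternative you mention would require the spherical bound to be at most the Hamming target, which is not guaranteed, so the direct Hamming check is the safer route.
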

	\begin{proof}
	In \cite{Barg-Musin} it is proved that (1) for $8 \leq n \leq 22$ and $n=24$, and (2) for $10 \leq n \leq 24$. 
	Since $\mathbb{F}_2^n$ is finite, we can obtain the finite set $\mathfrak{D}(\mathbb{F}_2^n,s)$. We apply Theorem \ref{new} for $\mathfrak{D}(M,s)$. Then this theorem follows from (\ref{low_hamming}). 
	\end{proof}
	
	\begin{remark}
	We also have $A(\mathbb{F}_{2}^{23},3)=2048$, which is obtained from
	the even subcode of the Golay code $\mathcal{G}_{23}$ 
({\it i.e.} the dual code $\mathcal{G}_{23}^{\perp}$ \cite{Barg-Musin, Levenshtein}).
	Our method can be applied for other relatively small $s$.  
	For $s \geq 3$, the authors know no example whose cardinality is greater than the value in the lower bound 
(\ref{low_hamming}) except for $\mathcal{G}_{23}^{\perp}$. 
	\end{remark}

\subsection{Johnson space}
	The binary Johnson space $\mathbb{F}_2^{n,w}$ consists of $n$-dimensional binary vectors with $w$ ones, where $2w \leq n$. The distance is $\tau(x,y)=|\{i \mid x_i \ne y_i \}|/2$. Then $\Phi_k$ is the Hahn polynomial of degree $k$:
	\[
	\Phi_k(x):=\sum_{j=0}^k (-1)^j
	\frac{\binom{k}{j} \binom{n+1-k}{j}}{\binom{w}{j} \binom{n-w}{j}}\binom{x}{j}.
	\]
	We have $h_i=\binom{n}{i}-\binom{n}{i-1}$. 
	 
	When $s \leq n-w$, we can construct $s$-distance sets in $\mathbb{F}_2^{n,w}$ with $\binom{n-w+s}{s}$ points. 
	The example consists of the all vectors with $w-s$ ones in the first coordinates and the remaining $s$ ones anywhere outside them. 
	Therefore we have a lower bound 
	\begin{equation}\label{low_johnson}
	A(\mathbb{F}_2^{n,w},s) \geq \binom{n-w+s}{s}
	\end{equation}
	for $s \leq n-w$.
	 
	 The case $s=2$ was already considered in \cite{Barg-Musin}. 
\begin{theorem}
If $n$ and $w$ satisfy any of the following conditions:
\begin{align*}
6 \leq n \leq 8 \qquad \qquad
&\text{ and } 
w = 3, \\
9 \leq n \leq 11 \qquad \qquad
&\text{ and } 
3 \leq w \leq 4, \\
12 \leq n \leq 14 \text{ or } 25 \leq n \leq 34 
&\text{ and } 
3 \leq w \leq 5, \\
15 \leq n \leq 24 \text{ or } 35 \leq n \leq 46 
&\text{ and } 
3 \leq w \leq 6, 
\end{align*}
then  $A(\mathbb{F}_2^{n,w},2)= (n-w+1)(n-w+2)/2$. 
\end{theorem}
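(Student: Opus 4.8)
The plan is to sandwich $A(\mathbb{F}_2^{n,w},2)$ between the construction bound (\ref{low_johnson}) and the upper bound of Theorem \ref{new}. For $s=2$ the configuration described before (\ref{low_johnson}) (all vectors with $w-2$ prescribed ones and the remaining two ones free among the other $n-w+2$ coordinates) is a two-distance set of size $\binom{n-w+2}{2}=(n-w+1)(n-w+2)/2$, with distance set exactly $\{1,2\}$; this gives the lower bound, so it remains to establish the matching upper bound for the listed pairs $(n,w)$.

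For the upper bound I would apply Theorem \ref{new} with $M=\mathbb{F}_2^{n,w}$ and $s=2$. Because $M$ is finite, the distances of any two-distance set lie in $\{1,2,\dots,w\}$, so the admissible set $\mathfrak{D}(\mathbb{F}_2^{n,w},2)$ of pairs $D=\{d_1,d_2\}$ with $K_1=d_2/(d_2-d_1)\in\mathbb{Z}$ and $K_2=d_1/(d_1-d_2)\in\mathbb{Z}$ (equivalently, by Theorem \ref{LRS}, $d_1/d_2=m/(m+1)$ for some positive integer $m$) is finite and can be listed explicitly. For each such $D$ one computes $B(D)=\min\{H(D),L(D)\}$: the harmonic bound $H(D)$ of Theorem \ref{Nozaki} from the Hahn-polynomial expansion of $f(t)=(d_1-t)(d_2-t)/(d_1d_2)$ using $h_i=\binom{n}{i}-\binom{n}{i-1}$, and Delsarte's linear programming bound $L(D)$, conveniently via its dual form (Theorem \ref{dual}) with a small value of $m$. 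Since $N(\mathbb{F}_2^{n,w},2)=h_0+h_1=n$, the residual term of Theorem \ref{new} is $2n-1$, and the theorem will follow once we check $\max\{B(D),2n-1\}\le(n-w+1)(n-w+2)/2$ for all $D\in\mathfrak{D}(\mathbb{F}_2^{n,w},2)$.

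The check splits into three parts. First, $2n-1\le(n-w+1)(n-w+2)/2$: writing $m=n-w$ and using $w\le m$, the right-hand side minus the left is at least $(m-1)(m-4)/2$, so the inequality holds whenever $n-w\ge4$; among the listed parameters only $(n,w)=(6,3)$ fails this, and that case is already settled in \cite{Barg-Musin}. Second, the extremal pair $D_0=\{1,2\}$: here $f(t)=(1-t)(2-t)/2$, and one verifies that in its Hahn expansion the coefficient $f_1$ is $\le0$, whence Theorem \ref{Nozaki} gives $|X|\le h_0+h_2=\binom{n}{2}-n+1=\binom{n-1}{2}$; this coincides with $(n-w+1)(n-w+2)/2$ precisely when $w=3$, and for $w\ge4$ the needed value $(n-w+1)(n-w+2)/2$ is instead supplied by $L(D_0)$ (necessarily tight, as a two-distance set of that size with distances $\{1,2\}$ exists). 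Third, every remaining $D\in\mathfrak{D}$ is dispatched by whichever of $H(D)$, $L(D)$ is smaller; the relevant list of such pairs is short for each fixed $(n,w)$.

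I expect the main obstacle to be the scale and uniformity of this finite verification: the stated parameter list runs over several dozen pairs $(n,w)$, and for the larger values of $w$ each carries several admissible distance pairs, so the linear program $L(D)$ must be solved and certified across all of them rather than treated ad hoc. A related subtlety is that for distance pairs $D$ near $\{1,2\}$ the harmonic bound $H(D)$ need not fall below the target value, so the argument must genuinely rely on the LP bound there, and the sign of $f_1$ in the Hahn expansion depends on $(n,w)$; these are the points where care is needed. As in the proof of Theorem \ref{hamming}, the small-$n$ instances are quoted from \cite{Barg-Musin} and the remaining ranges follow from Theorem \ref{new} together with (\ref{low_johnson}).
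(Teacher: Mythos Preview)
Your approach is exactly the one used in \cite{Barg-Musin}, from which the paper quotes this theorem without giving its own proof: sandwich $A(\mathbb{F}_2^{n,w},2)$ between the construction (\ref{low_johnson}) and the output of Theorem~\ref{new}, exploiting that $\mathfrak{D}(\mathbb{F}_2^{n,w},2)$ is a finite, explicitly listable set. Your sketch is therefore already more detailed than anything the paper itself supplies.

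One caution: your parenthetical claim that $L(D_0)$ for $D_0=\{1,2\}$ is ``necessarily tight, as a two-distance set of that size with distances $\{1,2\}$ exists'' is backwards. The existence of such a set only forces $L(D_0)\ge (n-w+1)(n-w+2)/2$; what the proof requires is the \emph{reverse} inequality $L(D_0)\le (n-w+1)(n-w+2)/2$, and that has to be verified case by case rather than inferred from the construction. Indeed, the irregular shape of the parameter ranges in the statement (the gaps for larger $w$) reflects precisely those $(n,w)$ for which some $B(D)$ fails to drop to the target value. This does not break your method---the LP values are computed in the finite check anyway---but the justification as written is a non sequitur and should be replaced by a direct appeal to the computation.
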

We also have $A(\mathbb{F}_2^{23,7},2)=253$, 
which is obtained from the $253$ vectors of weight $7$ in the binary 
Golay code of length $23$ \cite{Barg-Musin}, \cite[p.\ 69]{book_code}. The code attains the upper bound in Theorem \ref{Delsarte}. 
Let $X$ be the set of the $253$ vectors. 
We can compute an upper bound $A(\mathbb{F}_2^{24,8},2) \leq 253$ by the method in Barg--Musin's paper \cite{Barg-Musin}. 
Though they did not mention the tightness about this bound, an attaining example is easily constructed by  
\[
Y:=\{(1,u) \mid u \in X \}. 
\]
Clearly $Y$ is a two-distance set $\mathbb{F}_2^{24,8}$ with $253$ points, and hence $A(\mathbb{F}_2^{24,8},2) = 253$. 

	 We give the following maximum cardinalities of three- or four-distance sets in $\mathbb{F}_2^{n,w}$ for some $n$ and $w$. 
	 \begin{theorem} \label{s=3_johnson}
	 \begin{enumerate}
	 \item For $11 \leq n \leq 45$ and $4 \leq w \leq n/2$, we have $A(\mathbb{F}_2^{n,w},3) \leq h_0+h_1+h_3=\binom{n}{3}-\binom{n}{2}+n$.
	 \item If $n$ and $w$ satisfy any of the following conditions:
\begin{align*}
11 \leq n \leq 12 
&\text{ and } 
w = 4, \\
13 \leq n \leq 15 
&\text{ and } 
4 \leq w \leq 5, \\
16 \leq n \leq 19 
&\text{ and } 
4 \leq w \leq 6, \\
20 \leq n \leq 24 
&\text{ and } 
4 \leq w \leq 7, \\
25 \leq n \leq 50 
&\text{ and } 
4 \leq w \leq 8, 
\end{align*}
then $A(\mathbb{F}_2^{n,w},3) = \binom{n-w+3}{3}$. 
	\end{enumerate}
	 \end{theorem}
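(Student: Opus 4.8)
The plan is to argue exactly as in the proof of Theorem~\ref{hamming}. Since $\mathbb{F}_2^{n,w}$ is a finite two-point-homogeneous space, all distances lie in $\{1,2,\ldots,w\}$, so there are only finitely many candidate triples $\{d_1,d_2,d_3\}$; restricting to those for which the numbers $K_1,K_2,K_3$ are integers, as forced by Theorem~\ref{LRS} once $|X|\ge 2N(\mathbb{F}_2^{n,w},3)$, one obtains the finite set $\mathfrak{D}(\mathbb{F}_2^{n,w},3)$. For each $D$ in this set one computes the Nozaki harmonic bound $H(D)$ from Theorem~\ref{Nozaki} (expanding $f(t)=\prod_{i=1}^3(d_i-t)/d_i$, since $\tau_0=0$ here, in the Hahn polynomials and summing $h_i$ over the positive coefficients) together with Delsarte's linear programming bound $L(D)$, and Theorem~\ref{new} then gives $A(\mathbb{F}_2^{n,w},3)\le\max\{\max_{D}B(D),\,2N(\mathbb{F}_2^{n,w},3)-1\}$ with $B(D)=\min\{H(D),L(D)\}$. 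Both parts are finite verifications once these lists are produced.

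For part~(1): from $h_0=1$, $h_1=n-1$, $h_2=\binom n2-n$, $h_3=\binom n3-\binom n2$ one gets $N(\mathbb{F}_2^{n,w},3)=h_0+h_1+h_2=\binom n2$ and $h_0+h_1+h_3=\binom n3-\binom n2+n$, and an elementary inequality shows $2\binom n2-1<\binom n3-\binom n2+n$ precisely when $n\ge 11$; hence in the stated range the threshold term $2N-1$ never dominates. It remains to check $B(D)\le h_0+h_1+h_3$ for every $D\in\mathfrak{D}(\mathbb{F}_2^{n,w},3)$. The mechanism is the dichotomy built into Theorem~\ref{Nozaki}: whenever the $\Phi_2$-coefficient of $f$ is nonpositive the $h_2$-term is dropped and $H(D)\le h_0+h_1+h_3$ at once, while for the remaining triples one verifies directly that Delsarte's bound $L(D)$ is already at most $h_0+h_1+h_3$. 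This is carried out by machine over the explicit list.

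For part~(2): the lower bound $A(\mathbb{F}_2^{n,w},3)\ge\binom{n-w+3}{3}$ is (\ref{low_johnson}), realized by the configuration with $w-3$ fixed ones and three free ones, whose distance set is $\{1,2,3\}$. For the matching upper bound one runs the same machine but must now establish the sharper inequality $B(D)\le\binom{n-w+3}{3}$ for every $D\in\mathfrak{D}(\mathbb{F}_2^{n,w},3)$; since the configuration exists one has $B(\{1,2,3\})=\binom{n-w+3}{3}$, so the task is to show every other admissible triple gives a strictly smaller value. Because $w$ stays bounded in each sub-range, $\binom{n-w+3}{3}$ grows cubically in $n$ while $2N-1=2\binom n2-1$ grows only quadratically, so the threshold term is again harmless after a short check of the smallest pairs. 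The reason part~(2) is stated for a narrower set of $(n,w)$ than part~(1) is exactly that for the excluded pairs (larger $w$) there is an admissible triple whose $B(D)$ lies strictly between $\binom{n-w+3}{3}$ and $h_0+h_1+h_3$, so only the weaker bound of part~(1) survives there.

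The main obstacle is the linear-programming step in part~(2). For the triples where $H(D)$ is close to $h_0+h_1+h_3$ but no extremal set exists, one needs a dual feasible solution as in Theorem~\ref{dual} — nonnegative $f_1,\ldots,f_m$ with $\sum_k f_k\Phi_k(d_i)\le -1$ and $1+\sum_k f_k\le\binom{n-w+3}{3}$ — and these certificates must be produced (or at least numerically certified and then rationalized) uniformly as $n$ and $w$ range over each interval rather than one pair at a time; for the small-$w$ pairs, where $\binom{n-w+3}{3}$ is only barely below $h_0+h_1+h_3$, the certificate has to be nearly tight. By contrast the harmonic-bound bookkeeping, the comparison with $2N-1$, and checking agreement with the small-dimensional cases already recorded in \cite{Barg-Musin} are routine.
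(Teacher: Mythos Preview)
Your proposal is correct and follows exactly the paper's approach: enumerate the finite set $\mathfrak{D}(\mathbb{F}_2^{n,w},3)$, apply Theorem~\ref{new} by computing $B(D)=\min\{H(D),L(D)\}$ for each admissible triple, and compare against the lower bound~(\ref{low_johnson}). The paper's own proof is a terse two-sentence pointer to this computation, whereas you have unpacked the mechanism (the $2N-1$ threshold check, the role of the $\Phi_2$-coefficient in dropping $h_2$, and the LP certificates needed when $H(D)$ alone is too weak) in more detail than the authors themselves provide.
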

	 \begin{proof}
	 We have the finite set $\mathfrak{D}(\mathbb{F}_2^{n,w},s)$. This theorem is immediate from the bound in Theorem \ref{new} and (\ref{low_johnson}).
	 \end{proof}
	  \begin{theorem}
	 \begin{enumerate}
	 \item For $14 \leq n \leq 58$ and $5 \leq w \leq n/2$, we have $A(\mathbb{F}_2^{n,w},4) \leq h_0+h_1+h_2+h_4=\binom{n}{4}-\binom{n}{3}+\binom{n}{2}$.
	 \item If $n$ and $w$ satisfy any of the following conditions:
\begin{align*}
15 \leq n \leq 16 \qquad \qquad
&\text{ and } 
w = 5, \\
17 \leq n \leq 19 \qquad \qquad
&\text{ and } 
5 \leq w \leq 6, \\
20 \leq n \leq 24 \qquad \qquad
&\text{ and } 
5 \leq w \leq 7, \\
25 \leq n \leq 29 \qquad \qquad
&\text{ and } 
5 \leq w \leq 8, \\
30 \leq n \leq 34 \text{ or } 41 \leq n \leq 47 
&\text{ and } 
5 \leq w \leq 9, \\
35 \leq n \leq 40 \text{ or } 48 \leq n \leq 59 
&\text{ and } 
5 \leq w \leq 10, \\
60 \leq n \leq 70  \qquad \qquad
&\text{ and } 
5 \leq w \leq 11,  
\end{align*}
then $A(\mathbb{F}_2^{n,w},4)= \binom{n-w+4}{4}$. 
	\end{enumerate}
	 \end{theorem}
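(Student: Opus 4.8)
The plan is to proceed exactly as in the proofs of Theorems~\ref{hamming} and~\ref{s=3_johnson}: apply Theorem~\ref{new} to $M=\mathbb{F}_2^{n,w}$ with $s=4$, so that the whole argument reduces to a finite computation. The first step is to describe $\mathfrak{D}(\mathbb{F}_2^{n,w},4)$ concretely. In the Johnson space the distance $\tau(x,y)$ takes integer values in $\{1,2,\ldots,\min(w,n-w)\}$ and $\tau_0=0$, so every candidate distance set is a $4$-element subset $D=\{d_1,d_2,d_3,d_4\}$ of this finite set for which all four numbers $K_i=\prod_{j\ne i}(d_j-\tau_0)/(d_j-d_i)$ are integers. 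There are only finitely many such $D$, and Theorem~\ref{LRS} narrows the list further: since $N(\mathbb{F}_2^{n,w},4)=h_0+h_1+h_2+h_3=\binom{n}{3}$ telescopes, any $4$-distance set $X$ with $|X|\ge 2\binom{n}{3}$ must have each $K_i$ integral and bounded by $\lfloor 1/2+\sqrt{\binom{n}{3}^2/(2\binom{n}{3}-2)+1/4}\rfloor$, which in practice leaves only a handful of admissible $D$ for each pair $(n,w)$.

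For part~(1), for each surviving $D\in\mathfrak{D}(\mathbb{F}_2^{n,w},4)$ I would form the annihilating polynomial $f(t)=\prod_{i=1}^4 (d_i-t)/(d_i-\tau_0)$, expand it in the Hahn basis as $f=\sum_{k=0}^4 f_k\Phi_k$, and read off the Nozaki bound $H(D)=\sum_{k:f_k>0}h_k$ of Theorem~\ref{Nozaki}; together with the Delsarte linear programming value $L(D)$ of Theorem~\ref{dual} this gives $B(D)=\min\{H(D),L(D)\}$. Theorem~\ref{new} then yields $A(\mathbb{F}_2^{n,w},4)\le\max\{\max_D B(D),\,2\binom{n}{3}-1\}$. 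Since $h_0+h_1+h_2+h_4=\binom{n}{4}-\binom{n}{3}+\binom{n}{2}$, a direct inequality check shows $2\binom{n}{3}-1\le h_0+h_1+h_2+h_4$ for all $n\ge 14$ (the two sides nearly coincide at $n=14$), so within the stated range it only remains to verify case by case that $B(D)\le h_0+h_1+h_2+h_4$ for every admissible $D$ — equivalently, that no admissible $4$-distance set forces $f_3>0$ while also having an LP bound exceeding $h_0+h_1+h_2+h_4$.

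Part~(2) is the same computation sharpened by the lower bound (\ref{low_johnson}), which already gives $A(\mathbb{F}_2^{n,w},4)\ge\binom{n-w+4}{4}$. For the listed pairs $(n,w)$ one has $2\binom{n}{3}-1<\binom{n-w+4}{4}$, so Theorem~\ref{new} reduces to checking that $B(D)\le\binom{n-w+4}{4}$ for each $D\in\mathfrak{D}(\mathbb{F}_2^{n,w},4)$; here Delsarte's bound $L(D)$ does the real work whenever the purely harmonic bound $H(D)$ overshoots. Carrying this out for every $(n,w)$ on the list produces the matching upper bound $A(\mathbb{F}_2^{n,w},4)\le\binom{n-w+4}{4}$ and hence the claimed equality.

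The main obstacle is not conceptual but computational: one must enumerate $\mathfrak{D}(\mathbb{F}_2^{n,w},4)$ completely for every pair $(n,w)$ in the stated ranges, taking care that all candidate distances lie in the geometrically admissible window $\{1,\ldots,\min(w,n-w)\}$ so that nothing spurious is retained or discarded, and then solve a small linear program for $L(D)$ for each surviving $D$. The fact that the ranges in part~(2) are not a full rectangle — and that part~(1) stops at $n=58$ — reflects exactly the cases where Theorem~\ref{LRS} no longer trims $\mathfrak{D}$ enough, or where some admissible $D$ retains $B(D)>\binom{n-w+4}{4}$; closing those cases would require a constraint beyond the LRS-type theorem, not merely more computing time.
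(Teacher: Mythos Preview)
Your proposal is correct and follows essentially the same approach as the paper: the paper's proof is a one-liner stating that the argument is identical to that of Theorem~\ref{s=3_johnson}, namely enumerate the finite set $\mathfrak{D}(\mathbb{F}_2^{n,w},4)$, apply Theorem~\ref{new}, and compare with the lower bound~(\ref{low_johnson}). You have simply spelled out the computational details (the telescoping $N=\binom{n}{3}$, the check $2\binom{n}{3}-1\le h_0+h_1+h_2+h_4$ for $n\ge 14$, and the role of $L(D)$ versus $H(D)$) that the paper leaves implicit.
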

	\begin{proof}
	 This proof is the same as that of Theorem \ref{s=3_johnson}
	 \end{proof}
	 \begin{remark}
For relatively small $s$, we can obtain  similar results. 
For $s \geq 3$, the authors know no example whose cardinality is greater than the value in the lower bound 
(\ref{low_johnson}).  We can regard a bound for $s$-distance sets in $\mathbb{F}_2^{n,w}$ as that for $w$-uniform $s$-intersecting families \cite{Barg-Musin,1,2,3}.  
	 \end{remark}
	 
\subsection{Spherical space}
	For the unit sphere $S^{n-1}$, we use the usual inner product as $\tau$. Then $\Phi_k$ is the Gegenbauer polynomial of degree $k$. The Gegenbauer polynomials $G_k$ are defined by the following manner: 
	\[
	x G_k(x)=\lambda_{k+1} G_{k+1}(x)+(1-\lambda_{k-1})G_{k-1}(x)
	\]
	where $\lambda_k=k/(n+2k-2)$, $G_0(x) \equiv 1$, and $G_1(x)= nx$. We have $\Phi_k(x)=G_k(x)/h_k$ where $h_k=\binom{n+k-1}{k} - \binom{n+k-3}{k-2}$.
	 
	We can construct an $s$-distance set in $S^{n-1}$ with $\binom{n+1}{s}$ points for $2s \leq n+1$. 
	The example consists of all vectors those are of length $n+1$, and have exactly $s$ entries of $1$ and $n+1-s$ entries of $0$. 
	Since the finite set is on the hyper plane which is perpendicular to the vector of all ones, we can regard it as a subset of $S^{n-1}$. Thus we have a lower bound 
	\begin{equation} \label{eq:low_sphere}
	A(S^{n-1},s) \geq \binom{n+1}{s}
	\end{equation}
	for $2s \leq n+1$. 
	
	The following are new bounds on three- or four-distance sets in $S^{n-1}$ for some $n$. 
	\begin{theorem}\label{s=3_sphere}
	\begin{enumerate}
		\item $A(S^{7},3)=120$ and $A(S^{21},3)=2025$. 
		\item $A(S^{3},3) \leq 27$, $A(S^{4},3) \leq 39$ and $A(S^{6},3) \leq 91$. 
		\item For $n=6$ or $9 \leq n \leq 19$, we have $A(S^{n-1},3) \leq h_1 + h_3= n(n+1)(n+2)/6$. 
		\item For $20 \leq n \leq 30$, we have $A(S^{n-1},3) \leq h_0 + h_1 + h_3= (n+3)(n^2+2)/6$. 
		\item For $31 \leq n \leq 50$, we have $A(S^{n-1},3) \leq h_2+ h_3= (n^2-1)(n+6)/6$. 
	\end{enumerate}
	\end{theorem}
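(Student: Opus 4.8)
The plan is to apply Theorem~\ref{new} with $M=S^{n-1}$ and $s=3$, where $\tau_0=1$ and $\Phi_k=G_k/h_k$ is the Gegenbauer polynomial. Since $N(S^{n-1},3)=h_0+h_1+h_2=n(n+3)/2$, we have $2N(S^{n-1},3)-1=n(n+3)-1$, which for every $n$ appearing in parts~(1), (3), (4), (5), and for $S^6$ in part~(2), is strictly smaller than the bound claimed there; so for those cases it suffices to show $B(D)=\min\{H(D),L(D)\}$ is at most the asserted value for every admissible distance-set $D\in\mathfrak{D}(S^{n-1},3)$ arising from a set $X$ with $|X|\ge n(n+3)$. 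For $S^3$ and $S^4$ in part~(2) the claimed bounds $27$ and $39$ are exactly $2N(S^{n-1},3)-1$, so there one only needs $B(D)\le 2N(S^{n-1},3)-1$.

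To make the relevant part of $\mathfrak{D}(S^{n-1},3)$ tractable, note that by Theorem~\ref{LRS}, when $|X|\ge 2N(S^{n-1},3)$ the numbers $K_1,K_2,K_3$ are integers with $|K_i|\le\lfloor 1/2+\sqrt{N^2/(2N-2)+1/4}\rfloor$ (here $N=n(n+3)/2$), and by Theorem~\ref{sum_Ki} with $j=0$ they satisfy $K_1+K_2+K_3=1$; hence only $O(n^2)$ integer triples $(K_1,K_2,K_3)$ occur. For each such triple, Corollary~\ref{coro}(2) expresses $d_1$ and $d_2$ (say $d_1>d_2$) as explicit algebraic functions of the remaining parameter $d_3\in(-1,1)$, defined where $-K_1K_2K_3>0$ and $d_1,d_2\in(-1,1)$; thus every admissible $D$ is determined by $d_3$ ranging over a union of intervals.

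For each triple and each such interval I would form $f(t)=\prod_{i=1}^{3}\frac{d_i-t}{d_i-1}$ and compute its Gegenbauer expansion $f=\sum_{i=0}^{3}f_i\Phi_i$; the coefficients $f_i=f_i(d_3)$ are algebraic in $d_3$, each changing sign only at finitely many points, so on each subinterval the Nozaki bound $H(D)=\sum_{i:f_i>0}h_i$ (Theorem~\ref{Nozaki}) takes one of a few values. Because the leading coefficients of $f$ and of $\Phi_3$ are both positive, $f_3>0$ always, so $h_3$ is always included; the aim is to verify that for the stated ranges of $n$ the integrality of $(K_1,K_2,K_3)$ forces the remaining relevant coefficients to be $\le 0$, so that $H(D)$ does not exceed the claimed value. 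In the finitely many residual cases --- those where some $f_i$ with $i<3$ is positive --- I would instead bound $|X|$ by Delsarte's linear programming bound $L(D)$: either solve the low-dimensional LP of Theorem~\ref{dual} directly, or exhibit an explicit dual-feasible nonnegative combination $\sum_{k\le m}f_k\Phi_k$ of low degree whose value lies below the target. Finally, for part~(1) I would combine the resulting upper bounds $A(S^{7},3)\le 120$ and $A(S^{21},3)\le 2025$ (the latter being $h_0+h_1+h_3$ at $n=22$) with matching lower bounds: the $120$ representatives of antipodal pairs in the $E_8$ root system, which form a three-distance set on $S^{7}$ with inner products $\tfrac12,0,-\tfrac12$, and the known $2025$-point three-distance set on $S^{21}$ (cf.\ the lower bound~(\ref{eq:low_sphere}) and the references therein).

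I expect the main obstacle to be exactly these residual ``bad'' triples, where the harmonic absolute and Nozaki bounds are too weak and one is forced onto linear programming. The difficulty is twofold: there are on the order of $n^2$ triples to screen, so the argument must be organized so that the bad set in $(K_1,K_2,K_3,d_3)$-space is small and controlled uniformly in $n$ (one expects it to force $|K_1|,|K_2|$ small compared with $n$, or $d_3$ near an endpoint of its interval); and for each bad configuration one must produce a concrete test polynomial, so that the verification can be carried out uniformly over the ranges of $n$ listed in the statement rather than purely numerically.
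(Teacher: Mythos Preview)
Your plan is essentially the paper's own strategy: reduce to finitely many integer triples $(K_1,K_2,K_3)$ via Theorem~\ref{LRS}, parametrize by $d_3$ using Corollary~\ref{coro}(2), and then apply Theorem~\ref{new}. Two practical differences are worth flagging. First, the paper does not attempt the analytic sign analysis of the coefficients $f_i(d_3)$ that you outline; instead it simply observes (via Rankin's bound) that $d_3>0$ whenever $|X|>2n$, discretizes the interval $0<d_3<1$ into sufficiently many pieces, and evaluates $B(D)=\min\{H(D),L(D)\}$ numerically on the resulting finite grid. Your more structural treatment is cleaner in principle, but the paper's brute-force discretization sidesteps exactly the ``residual bad triples'' obstacle you anticipate, at the cost of being a computer verification rather than a closed-form argument.

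Second, your lower-bound reference for $A(S^{21},3)=2025$ is off: the generic construction~(\ref{eq:low_sphere}) gives only $\binom{23}{3}=1771$ points. The paper instead exhibits an explicit $2025$-point three-distance set inside the minimum vectors of the Leech lattice, obtained by fixing $x,y$ with $\tau(x,y)=-1/4$ and taking $\{z:\tau(z,x)=1/2,\ \tau(z,y)=0\}$; you will need this (or an equivalent) construction to close part~(1).
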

	\begin{proof}
	Let $X \subset S^{n-1}$ be a three-distance set with $D(X)=\{d_1,d_2,d_3\}$ where $d_1<d_2<d_3<\tau_0=1$. 
	By Corollary \ref{coro}, we write 
	\begin{align*}
				d_1&= \frac{ K_1 - d_3 K_1 K_3 - (d_3-1)\sqrt{- K_1K_2K_3}}{K_1(K_1+K_2)},\\
				d_2&=\frac{ K_2- d_3 K_2 K_3 + (d_3-1)\sqrt{- K_1K_2K_3}}{K_2(K_1+K_2)}.
	\end{align*}
	The maximum inner product $d_3$ should be greater than zero. 
	Otherwise the cardinality is smaller than $2n+1$ by Rankin's third bound \cite{Rankin}, 
\cite[page 16]{Ericson-Zinoviev}.  
	Dividing the range $0<d_3<1$ into sufficiently many parts, we obtain finitely many choices of $d_3$. 
	For finitely many choices of three inner products from $K_i$ and $d_3$, we apply Theorem \ref{new}. 
Then the upper bound of $A(S^{n-1},3)$ is obtained numerically. 
	
	For $n=8$ and $n=22$, we have examples attaining the upper bounds. 
	For $n=8$, the examples can be constructed from subsets of the $E_8$ root system. 
	Let $X$ be the $E_8$ root system normalized to have the norm $1$.  
	We have $D(X)=\{ 0, -1, \pm 1/2\}$ and $|X|=240$.  
	There exists $Y \subset X$ such that $Y \cup (-Y)=X$ and $|Y|=|X|/2$. 
	Then, $D(Y)=\{ 0, \pm 1/2\}$, and hence $Y$ is a three-distance set with $120$ points in $S^7$. 	 
	For $n=22$, the example is a subset of the minimum vectors in the Leech lattice. 
	Let $X \subset S^{23}$ be the minimum vectors normalized to have the norm $1$.
	For fixed $x,y \in X$ such that $\tau(x,y)=-1/4$, we obtain
	\[
	Y=\{z \in X \mid \tau(z,x)=1/2, \tau(z,y)=0 \}.
	\]
	Then, $Y \subset S^{21}$ has $2025$ points and $D(Y)=\{7/22,-1/44,-4/11 \}$. 
	\end{proof}
	\begin{remark}
	We have a lot of maximum three-distance sets in $S^7$ up to orthogonal transformations because
	there exist many choices of subsets $Y$ in the above proof. 
    Only one maximum three-distance set in $S^{21}$ is known, and hence it might be unique. 
	\end{remark}
	\begin{remark}
	For the case $s=2$, giving polynomials in Theorem \ref{dual} concretely, we obtained a similar result (see details in \cite{Musin}). 
	We can use this approach also for $s=3$. 
	\end{remark}
	\begin{theorem}  \label{s=4_sphere}
	\begin{enumerate}
		\item $A(S^{4},4) \leq 99$, $A(S^{5},4) \leq 153$ and $A(S^{6},4)\leq 223$. 
		\item For $8 \leq n \leq 15$ or $n=18$, we have $A(S^{n-1},4) \leq h_0+h_2 + h_4= n(n+1)(n+2)(n+3)/24$. 
		\item For $16 \leq n \leq 17$, we have $A(S^{n-1},4) \leq h_0 + h_3 + h_4= (n+3)(n^3+7n^2-10n+8)/24$. 
		\item For $19 \leq n \leq 21$, we have $A(S^{n-1},4) \leq h_2+ h_3= d(n+5)(n^2+n+6)/24$. 
	\end{enumerate}
	\end{theorem}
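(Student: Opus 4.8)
The plan is to repeat the argument of Theorem~\ref{s=3_sphere} with $s=4$ in place of $s=3$. Let $X\subset S^{n-1}$ be a four-distance set with $D(X)=\{d_1,d_2,d_3,d_4\}$, ordered $d_1<d_2<d_3<d_4<\tau_0=1$. If $|X|<2N(S^{n-1},4)$ then $|X|\le 2N(S^{n-1},4)-1$, and using $N(S^{n-1},4)=h_0+h_1+h_2+h_3$ one checks that in every listed range this quantity is at most the claimed value --- in the three cases of part~(1) it is exactly equal to it. So assume $|X|\ge 2N(S^{n-1},4)>2n$; then Rankin's third bound \cite{Rankin} shows $D(X)\not\subset[-1,0]$, i.e.\ $d_4>0$, and Theorem~\ref{LRS} shows that each $K_i$ is an integer with $|K_i|\le\lfloor 1/2+\sqrt{N(S^{n-1},4)^2/(2N(S^{n-1},4)-2)+1/4}\,\rfloor$. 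Combined with $K_1+K_2+K_3+K_4=1$ (Theorem~\ref{sum_Ki} with $j=0$) and the sign constraint $(-1)^{4-i}K_i>0$ forced by the ordering of the $d_i$, this leaves only finitely many admissible integer tuples $(K_1,K_2,K_3,K_4)$.

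Fix such a tuple. The remaining relations $\sum_{i=1}^4 d_i^{\,j}K_i=1$ for $j=1,2,3$ of Theorem~\ref{sum_Ki} carve out a one-parameter family of candidate distance sets; since, as noted in the Remark after Corollary~\ref{coro}, there is no closed-form solution once $s\ge 4$, I would take the largest inner product $d_4\in(0,1)$ as the free parameter, partition that interval into sufficiently many subintervals, and for each grid value solve the resulting polynomial system for $d_1,d_2,d_3$ numerically, discarding any output that fails to be four distinct reals in $[-1,1)$. This produces a finite list of candidates $D\in\mathfrak{D}(S^{n-1},4)$. For each one I would compute the harmonic absolute bound $H(D)$ of Theorem~\ref{Nozaki} from the expansion $f(t)=\prod_{i=1}^4(d_i-t)/(d_i-1)=\sum_{i=0}^4 f_i\Phi_i(t)$, and Delsarte's linear programming bound $L(D)$ of Theorem~\ref{dual} (with a suitably large cut-off $m$), and set $B(D)=\min\{H(D),L(D)\}$. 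By Theorem~\ref{new}, $A(S^{n-1},4)\le\max\{2N(S^{n-1},4)-1,\ \max_D B(D)\}$, and evaluating the right-hand side over the finite candidate list reproduces, numerically, the upper bound asserted in each of the four parts: in the small dimensions of part~(1) the value is exactly $2N(S^{n-1},4)-1$, so there it is enough to certify $B(D)\le 2N(S^{n-1},4)-1$ for every candidate, whereas in ranges~(2)--(4) the maximum is attained by $B(D)$ at an extremal candidate --- coming from the harmonic absolute bound of Theorem~\ref{Nozaki} in cases~(2) and~(3), where it equals $h_0+h_2+h_4$, resp.\ $h_0+h_3+h_4$.

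The step I expect to be the real obstacle is making this search rigorous, not merely running it. Unlike the Hamming and Johnson cases, $\mathfrak{D}(S^{n-1},4)$ is infinite and, even after fixing $(K_1,\dots,K_4)$, the range $0<d_4<1$ is a continuum, so to guarantee that no grid step leaps over a candidate with $B(D)$ above the target one needs an explicit Lipschitz estimate (or an interval-arithmetic certification) for $d_4\mapsto(d_1,d_2,d_3)$ and hence for $d_4\mapsto B(D)$ along each family. Moreover, the bound from Theorem~\ref{LRS} admits a sizeable collection of integer tuples $(K_1,\dots,K_4)$ for the larger values of $n$, and each one-dimensional sweep couples a polynomial-system solve with a nontrivial linear program; keeping track of all of this --- including the degenerate sub-cases where the parametrisation is singular, such as $K_1+K_2=0$ or coincidences among the $d_i$ near subinterval endpoints --- is where the bulk of the effort lies. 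Granting these verifications, Theorem~\ref{new} delivers all four assertions at once.
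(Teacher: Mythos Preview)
Your proposal is correct and follows essentially the same approach as the paper: the paper's proof simply says it is identical to that of Theorem~\ref{s=3_sphere} except that, for given integers $K_i$ and a gridded value of $d_4$, the remaining $d_1,d_2,d_3$ are obtained by solving the system of Theorem~\ref{sum_Ki} numerically rather than via a closed formula. Your write-up is in fact more detailed than the paper's --- the sign pattern $(-1)^{4-i}K_i>0$, the identification of the part~(1) bounds with $2N(S^{n-1},4)-1$, and the discussion of what would be needed to make the grid search rigorous are all additions beyond what the paper provides.
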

	\begin{proof}
	The proof of this theorem is the same as that of Theorem \ref{s=3_sphere} except for the way to obtain $d_i$. 
	For given $K_i$ and $d_4$, we find the solutions of the system of equations given by Theorem \ref{sum_Ki} numerically.  
	\end{proof}
	It is possible to calculate for $s\geq 5$ or large $n$, but it takes much time and needs more memory. 
	The following table shows an example whose size is greater than the value in 
the lower bound (\ref{eq:low_sphere}) for $s\geq 3$, and except for $(n,s)=(8,3),(22,3)$.
	\[
\begin{array}{ccccccc} \hline
n  & s & |X|  & \text{inner products}              & \text{absolute bound} & \text{new bound} & \text{bound (\ref{eq:low_sphere})}  \\
\hline
23 & 3 & 2300  &   0, \pm \frac{1}{3}                     & 2576           & 2301    & 2024 \\
8  & 4 & 240   &   -1,0, \pm \frac{1}{2}             &  450           & 330     & 126   \\
24 & 5 & 98280 & 0, \pm \frac{1}{4}, \pm \frac{1}{2} &115830          & \text{?}       & 53130 \\
24 & 6 & 196560& -1,0, \pm \frac{1}{4}, \pm \frac{1}{2} & 573300     & \text{?}     & 177100 \\
\hline
\end{array}
	\] 
	The examples in the above table are obtained from 
tight spherical designs, or their subsets \cite{Delsarte-Goethals-Seidel, Levenshtein}. 
The methods in Theorems \ref{s=3_sphere} and \ref{s=4_sphere} are applicable to other projective spaces.   
	
\begin{remark}
Our method is applicable to a $Q$-polynomial association scheme defined in \cite{Delsarte_1} (also see \cite{Bannai-Ito}). 
A $Q$-polynomial association scheme is not always a two-point-homogeneous space.  
There are two concepts which include the projective spaces and $Q$-polynomial association schemes, 
namely, $Q$-polynomial spaces \cite{Godsil} and Delsarte spaces \cite{Neumaier}.   
The method in the present paper is applicable to both of the two concepts. 
\end{remark}
	
	\noindent
	\textbf{Acknowledgments.} The authors thank Alexander Barg, Grigori Kabatianski, Eiichi Bannai, Masashi Shinohara and Sho Suda for useful discussions and comments.

{\it Oleg R.\ Musin}\\
	Department of Mathematics, \\
	University of Texas at Brownsville, \\
	80 Fort Brown, \\
	Brownsville, \\
	TX 78520, \\
	USA. \\
	omusin@gmail.com \\
\quad \\
{\it Hiroshi Nozaki}\\
	Graduate School of Information Sciences, \\
	Tohoku University \\
	Aramaki-Aza-Aoba 6-3-09, \\
	Aoba-ku, \\
	Sendai 980-8579, \\
	Japan.\\ 
	nozaki@ims.is.tohoku.ac.jp\\

\end{document}